\documentclass{llncs}
\pagestyle{plain}
\usepackage{amsmath,amssymb}
\usepackage{subfigure}
\usepackage{tikz}
\usepackage[linesnumbered,ruled,vlined]{algorithm2e}
\newcommand{\classNP}{\mathcal{NP}}
\newcommand{\classP}{\mathcal{P}}
\usetikzlibrary[backgrounds]
\setlength{\parindent}{0cm}
\usepackage{array}
\usepackage{nicefrac}
\usepackage{hyperref}

\newcommand{\abs}[1]{\left\lvert#1\right\rvert}
\newcommand{\sbmultcov}{\ensuremath{set \, multicover}\xspace}

\newcommand{\bb}{\mathbf{b}}
\newcommand{\kk}{\mathbf{k}}

\begin{document}
\title{ Approximation algorithm
for the Multicovering Problem }

\author{Abbass Gorgi
\and Mourad El Ouali
\and Anand Srivastav
\and Mohamed Hachimi}

\institute{%
Abbass Gorgi \\%\at
University Ibn Zohr,
Agadir,Morocco \\
\email{abbass.gorgi@gmail.com}
\and
Mourad El Ouali\\ %\at
Christian Albrechts university Kiel,
 Germany\\
\email{Elouali@math.uni-kiel.de}
\and
Anand Srivastav\\ %\at
Christian Albrechts university Kiel,
 Germany\\
\email{srivastavi@math.uni-kiel.de}
\and
Mohamed Hachimi\\ %\at
University Ibn Zohr,
Agadir,Morocco \\
\email{m.hachimi@uiz.ac.ma}
}

\date{Received: date / Accepted: date}
% The correct dates will be entered by the editor

\maketitle

\begin{abstract}
Let $\mathcal{H}=(V,\mathcal{E})$ be a hypergraph
with maximum edge size $\ell$ and maximum degree $\Delta$. 
For given numbers $b_v\in \mathbb{N}_{\geq 2}$, $v\in V$,
a set multicover in $\mathcal{H}$ is a
set of edges $C \subseteq \mathcal{E}$ such that every vertex $v$ in $V$
belongs to at least $b_v$ edges in $C$.
\sbmultcov is the problem of finding a minimum-cardinality set multicover.
Peleg, Schechtman and Wool conjectured that for any fixed $\Delta$
and $b:=\min_{v\in V}b_{v}$, the problem of \sbmultcov is not
approximable within a ratio less
than $\delta:=\Delta-b+1$, unless $\classP =\classNP$. Hence it's a challenge to explore for which classes of hypergraph the conjecture doesn't hold. 
 
We present a polynomial time algorithm for the \sbmultcov problem
which combines a deterministic threshold algorithm with
conditioned randomized rounding steps.
Our algorithm yields an approximation ratio of
$ \max\left\{ \frac{148}{149}\delta, \left(1- \frac{ (b-1)e^{\frac{\delta}{4}}}{94\ell} \right)\delta \right\}$. Our  result not only improves over the approximation ratio presented by Srivastav et al (Algorithmica 2016) but it's more general since we set no restriction on the parameter $\ell$.\\
Moreover we present a further polynomial time algorithm with an approximation ratio of $\frac{5}{6}\delta$ for hypergraphs with $\ell\leq (1+\epsilon)\bar{\ell}$ for any fixed 
$\epsilon \in [0,\frac{1}{2}]$, where $\bar{\ell}$ is the average edge size.
The analysis of this algorithm relies on matching/covering duality due to Ray-Chaudhuri (1960), 
which we convert into an approximative form.
The second performance disprove the conjecture of peleg et al for a large subclass of hypergraphs.
 
\end{abstract}

\noindent{\bf Keywords:} Integer linear programs, hypergraphs,
approximation algorithm, randomized rounding, set cover and set multicover, ${\bf k}$-matching .

\section{Introduction}
The \sbmultcov problem is a fundamental covering issue that widely explored in the theory of optimization. A nicely formulation of this problem may given by the notion of hypergraphs which offer tools to deal with sets.

A hypergraph is a pair 
$\mathcal{H}=(V,\mathcal{E})$, where $V$ is a finite set and 
$\mathcal{E}\subseteq 2^V$ is a family of some subsets of $V$. We call the elements of $V$
vertices and the elements of $\mathcal{E}$ (hyper-)edges. Further let 
$n := |V|$, $m := |{\cal E}|$.
W.l.o.g. let the elements of $V$ be enumerated as $v_1,v_2,\dots,v_n$.
Let $\ell$ be the maximum edge size, $\bar{\ell}=\frac{1}{m}\sum^{m}_{j=1}|E_{j}|$ the average edge size and let $\Delta$ be the maximum vertex degree,
where the degree of a vertex is the number of edges containing that vertex. If for every $E\in \mathcal{E};\, |E|=\ell$ than the hypergraph is called uniform.

\noindent Let $\bb:=(b_1,b_2,\dots,b_n) \in \mathbb{N}_{\geq 2}^{n}$ be given.
If a vertex $v_i$, $i\in[n]$, is contained in at least $b_i$ edges
of some subset $C \subseteq \mathcal{E}$,
we say that the vertex $v_i$ is fully covered by $b_i$ edges in $C$.
A set multicover in $\mathcal{H}$ is a set of edges $C \subseteq \mathcal{E}$ 
such that every vertex $v_i$ in $V$ is fully covered by $b_i$ edges in $C$.
\sbmultcov problem is the task of finding a set multicover of minimum cardinality.
Note that the usual set cover problem, which is known to be NP-hard \cite{Karp}, is a special case with $b_i=1$ for all $i\in [n]$. Furthermore Peleg, Schechtman and Wool conjectured that for any fixed $\Delta$ and $b:=\min_{i\in [n]}b_i$ the problem cannot be approximated by a ration smaller than $\delta:= \Delta-b+1$ unless $\mathcal{P}=\mathcal{NP}$. Hence it remained an open problem whether an approximation ration of $\beta\delta$ with $\beta<1$ constant can be proved. We say that an algorithm $A$ is
an approximation algorithm for \sbmultcov problem 
with performance ratio $\alpha$,
if for each instance $I$ of size $n$ of \sbmultcov problem,
$A$ runs in polynomial time in $n$
and returns a value $\abs{A(I)}$
such that $\abs{A(I)} \le \alpha\cdot\mathrm{Opt}$, where $\mathrm{Opt}$ is the cardinality of a minimum set multicover.

The \sbmultcov problem 
can also be formulated as an integer linear program as follows
\begin{equation*}
\min \{\sum_{j=1}^m x_j;\, Ax\geq \bb, x\in \{0,1\}^{m}\}\qquad \left(\mbox{ILP}(\Delta, {\bf b})\right)
\end{equation*}
where $ A=(a_{ij})_{i \in [n], \,j \in [m]}\in \{0,1\}^{n \times m}$
is the vertex-edge incidence matrix of ${\cal H}$
and ${\bf b}=(b_1,b_2,\dots,b_n) \in \mathbb{N}_{\geq 2}^{n}$
is the given integer vector.

The linear programming relaxation LP($\Delta,\,{\bf b}$)
of ILP($\Delta,\,{\bf b}$) is given by relaxing the integrality constraints
to  $x_j\in [0,1] \;$for all $j \in [m]$.
Let $x^{*}\in [0,1]^{m}$ be an optimal solution of LP($\Delta,\,{\bf b}$) than ${\rm Opt}^{*} =\sum_{j=1}^mx^{*}_{j}$ is the value of the optimal solution to LP($\Delta,\,{\bf b}$), We have ${\rm Opt}^{*} \leq \mathrm{Opt}$.
%, where $\mathrm{Opt}$ is the optimum solution value for instance $I$

\noindent {\bf Related Work.} The set cover problem $(b=1)$ has been over decades intensively explored. Several deterministic approximation algorithms are exhibited for
this problem \cite{Ba01,GaKhSr01,Hoch82,Koufogiannakis},
all with approximation ratios $\Delta$. On the other hand Khot and Regev in \cite{KR08} proved that the problem
cannot be approximated within factor $\delta-\epsilon =\Delta-\epsilon$ assuming that unique games conjecture is true. Furthermore Johnson \cite{Johnson74}
and Lov\'asz \cite{Lovasz94} gave a greedy algorithm
with performance ratio $H(\ell)$,
where $ H(\ell) =\sum_{i=1}^{\ell}\frac{1}{i}$ is the harmonic number.
Notice that $H(\ell)\leq 1+\ln(\ell)$.
For hypergraphs with bounded $\ell$, Duh and Fürer \cite{Duh97}
used the technique called semi-local optimization
improving $H(\ell)$ to $H(\ell)-\frac{1}{2}$. 
In contrast to set cover problem it is less known for the case $b\geq 2$. Let give a brief summary of the known approximability results.
%was obtained using a greedy algorithm by Dobson \cite{Dobson82}.
In paper \cite{Vazirani01}, Vazirani using dual fitting method extended the result of Lov\'asz \cite{Lovasz94} for $b\geq 1$. Later Fujito et al. \cite{Fujito} improved the algorithm of Vazirani and achieved an approximation ratio of $H(\ell) -\frac{1}{6}$
for $\ell$ bounded.
Hall and Hochbaum \cite{HH86} achieved by a greedy algorithm 
based on LP duality an approximation ratio of $\Delta$.
By a deterministic threshold algorithm
Peleg, Schechtman and Wool in 1997 \cite{PSW97,PSW93}
improved this result and gave an approximation ratio $\delta$.
They were also the first to propose an approximation algorithm
for the \sbmultcov problem with approximation ratio below $\delta$,
namely a randomized rounding algorithm
with performance ratio $(1-(\frac{c}{n})^\frac{1}{\delta})\cdot\delta$
for a small constant $c>0$.
However, their ratio is depending on $n$
and asymptotically tends to $\delta$.
A randomized algorithm of hybrid type was later given
by Srivastav et al \cite{EMS16}.
Their algorithm achieves for hypergraphs with
$l \in \mathcal{O}\left(\max\{(nb)^\frac{1}{5},n^\frac{1}{4}\}\right)$ an approximation ratio of
$ \left(1-\frac{11 (\Delta - b)}{72l}\right)\cdot \delta $
with constant probability.

\noindent {\bf Our Results.} 
The main contribution of our paper is the combination
of a deterministic threshold-based algorithm
with conditioned randomized rounding steps.
The idea is to algorithmically discard instances
that can be handled deterministically
in favor of instances
for which we obtain a constant factor approximation less than $\delta$
using a randomized strategy.

In the following we give a brief overview of the method.
First we give some fundamentals results based on the LP relaxation with threshold
that allows us to come up with an approximation ratio
strictly less than $\delta$
and use this results for the first algorithm.
This is an extension of an algorithm by Hochbaum \cite{Hoch82}
for the set cover problem and the vertex cover problem.

Let $(x^{\ast}_{1},\cdots,x^{\ast}_{m})$ be the optimal solution of the LP($\Delta,\,{\bf b}$). We define $C_1:=\{E_j \in \mathcal{E} \mathrel{|} x_j^{\ast}\geq\frac{2}{\delta +1}\}$, $C_2:=\{E_j\in \mathcal{E}\mathrel{|}\frac{1}{\delta} \leq x_j^{\ast}<\frac{2}{\delta +1}\}$ 
and $C_3:=\{E_j\in \mathcal{E}\mathrel{|} 0<x_j^{\ast}<\frac{2}{\delta +1}\}$. It follows that $C_1\cap C_2 =\emptyset $
and  $C_1\cup C_2 $ is a feasible set multicover.

Our first algorithm is designed as a cascade
of a deterministic and a randomized rounding step
followed by greedy repairing.
The threshold type algorithm first solves the relaxed LP($\Delta,\,{\bf b}$)
problem and then picks all the edges corresponding to variables
with fractional values at least $\frac{2}{\delta +1}$ to the output set. 
Depending on the cardinality of sets $C_1$ and $C_2$,
we use LP-rounding with randomization for the hyperedges of the set $C_3$.
Every edge of $C_3$ is independently added to the output set
with probability $\frac{\delta +1}{2} x_j^*$.
To guarantee feasibility, 
we proceed with a repairing step.
Our algorithm is an extension of an example
given in \cite{EFS14,EFS14a,EMS16,GaKhSr01,HH86,PSW93}
for the vertex cover, partial vertex cover and \sbmultcov problem
in graphs and hypergraphs.
 
The methods used in this paper rely on an application
of the Chernoff–Hoeffding bound technique
for sums of independent random variables
and are based on estimating the variance of the summed random variables
for invoking the Chebychev-Cantelli inequality.

We give a detailed analysis of the first algorithm
in which we explore the cases
by comparing the cardinality of the two sets $C_1$ and $C_2$
and the relative cardinality of $C_1$ with respect to ${\rm Opt}^{*}$.
Our algorithm yields a performance ratio of
$ \max\left\{ \frac{148}{149}\delta, \left(1- \frac{(b-1) e^{\frac{\delta}{4}}}{94\ell} \right)\delta \right\}$. 
This ratio means a constant factor less than $\delta$
for many settings of the parameters $\delta$, $b$ and $\ell$.
Further it is asymptotically better than the former approximation ratios
due to Peleg et al and Srivastav et al.
Furthermore we consider the problem in hypergraphs
${\cal H}=(V, {\cal E})$ with $\ell \leq (1+\epsilon)\bar{\ell}$ 
and do not assume that $\ell$ and $\Delta$ are constants.
We give a polynomial-time approximation algorithm
with an approximation ratio of $\frac{5}{6}\left(1-\frac{1}{2\ell}\right)\delta$ for any fixed $\epsilon\in [0,\frac{1}{2}]$.
The main progress is that our approximation ratio of at most 
$\frac{5}{6}\delta$.
Hence we disprove the conjecture of peleg et al for a large and important class of hypergraphs. Note that uniform hypergraphs fulfill the condition $\ell \leq (1+\epsilon)\bar{\ell}$.
%It improves also over the approximation of \cite{PSW97}, and generalizes to arbitrary $\ell$ and $\Delta$, while \cite{EMS15} considered $l\in O(\max\{(nb)^{\frac{1}{5}},n^{\frac{1}{4}}\})$ and $\Delta$ constant.\\

\renewcommand{\baselinestretch}{1.75}
\begin{table}[h]
\begin{center}
Fundamental results and approximations for \sbmultcov problem \\\vskip 0.2cm
  \begin{tabular}{|c |c | c |  }
\hline
Hypergraph  & Approximation ratio  \\
\hline
- &
 $ H(\ell)$\cite{Vazirani01}
 \\\hline
bounded  $\ell$  &
 $H(\ell) -\frac{1}{6}$ \cite{Fujito}
 \\\hline
- &
  $\delta$ \cite{HH86,PSW93}
   \\\hline
- &
$(1-(\frac{c}{n})^\frac{1}{\delta})\cdot \delta$   where $ c>0$ is a constant. \cite{PSW97}
 \\ \hline
$l \in \mathcal{O}\left(\max\{(nb)^\frac{1}{5},n^\frac{1}{4}\}\right)$
 & $\left(1 - \frac{11(\Delta - b)}{72\ell} \right)\cdot \delta $  \cite{EMS16} 
  \\\hline
 - & $ \max\left\{ \frac{148}{149}\delta, \left(1- \frac{(b-1) e^{\frac{\delta}{4}}}{94\ell} \right)\delta \right\}$ 
  $\ $ (this paper) 
\\ \hline
 $\ell \leq (1+\epsilon)\bar{\ell}$, for $\epsilon\in [0,\frac{1}{2}]$ & $\frac{5}{6}\left(1-\frac{1}{2\ell}\right)\delta$ (this paper)
 \\ 
  \hline
\end{tabular}
\end{center}
\end{table}
\renewcommand{\baselinestretch}{1.0}

\noindent {\bf Outline of the paper.} In Section 2 we give all the definitions and tools needed for the analysis of the performed results. Section 3 we present a randomized algorithm of hybrid type and its analysis. Section 4 we give a deterministic algorithm based on matching/covering duality and its analysis. Finally we sketch some  open questions.  
\section{ Definitions and preliminaries}

Let ${\cal H}=(V,{\cal E})$ be a hypergraph, $V$ and ${\cal E}$ is the set of vertices and  hyperedges respectively. For every vertex $v \in V$ we define
the vertex degree of $v$ as
$d(v):= |\{E \in {\cal E} \mathrel{|} v \in E \}|$ and $\Gamma(v):=\{E\in \mathcal{E} \mathrel{|} v\in E \}$ 
the set of edges incident 
to $v$.
The maximum vertex degree is $\Delta:=\max_{v \in V} d(v) $.
Let $l$ denote the maximum cardinality of a hyperedge from ${\cal E}$. 
It is convenient to order the vertices and edges, i.e., $V=\{v_1,\dotsc,v_n\}$
and ${\cal E}=\{ E_1,\dotsc,E_m\}$, and to identify the vertices
and edges with their indices.

\vskip 0.1cm\noindent
\sbmultcov problem:\\
\noindent Let $\mathcal{H}=(V,\,\mathcal{E})$ be a hypergraph
and $ (b_1,b_2,\dots,b_n) \in \mathbb{N}_{\geq 2}^{n}$.
We call $C \subseteq \mathcal{E}$ a set multicover if every vertex $i\in V$ 
is contained in at least $b_i$ hyperedges of $C$. \sbmultcov
is the problem of finding a set multicover with minimum cardinality.

\vskip 0.1cm\noindent
For the later analysis we will use the following Chernoff–Hoeffding Bound inequality for a sum of independent random variables: 
\begin{theorem}[see %\cite{AV} or 
\cite{McD89}]\label{AnVa}
Let $X_1,\ldots, X_n$ be independent $\{0,1\}$-random variables.
Let $X= \sum_{i=1}^n X_i$. For every $ 0<\beta\leq 1$ we have
\begin{equation*} \Pr[X\geq (1+\beta)\cdot \mathbb{E}(X)]\leq
\exp{\left(-\frac{\beta^2\mathbb{E}(X)}{3 } \right)}.
\end{equation*}
\end{theorem}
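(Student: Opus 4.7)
The plan is to use the classical exponential moment (Chernoff) method, in which the probability of a large deviation is translated into a statement about the moment generating function via Markov's inequality applied to $e^{tX}$. Writing $\mu = \mathbb{E}(X)$, for any $t>0$ I would start from
\begin{equation*}
\Pr[X \geq (1+\beta)\mu] \;=\; \Pr[e^{tX} \geq e^{t(1+\beta)\mu}] \;\leq\; \frac{\mathbb{E}[e^{tX}]}{e^{t(1+\beta)\mu}},
\end{equation*}
which is the only probabilistic input needed. Everything else is deterministic manipulation of the right-hand side.

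Next I would exploit independence of the $X_i$'s to factorise the numerator as $\mathbb{E}[e^{tX}] = \prod_{i=1}^{n}\mathbb{E}[e^{tX_i}]$. Since each $X_i$ takes only the values $0$ and $1$ with probabilities $1-p_i$ and $p_i$ respectively, one obtains $\mathbb{E}[e^{tX_i}] = 1 + p_i(e^t - 1)$, and the elementary inequality $1+y \leq e^y$ (for all real $y$) then gives $\mathbb{E}[e^{tX_i}] \leq \exp(p_i(e^t - 1))$. Multiplying and using $\sum_i p_i = \mu$ yields the clean bound $\mathbb{E}[e^{tX}] \leq \exp(\mu(e^t - 1))$, so overall
\begin{equation*}
\Pr[X \geq (1+\beta)\mu] \;\leq\; \exp\bigl(\mu(e^t-1) - t(1+\beta)\mu\bigr).
\end{equation*}

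Now I would optimise the free parameter $t$ on the right-hand side. Differentiating the exponent in $t$ and setting the derivative to zero gives $t = \ln(1+\beta)$, which is positive whenever $\beta>0$ as required. Substituting this back produces the standard Chernoff form
\begin{equation*}
\Pr[X \geq (1+\beta)\mu] \;\leq\; \left(\frac{e^{\beta}}{(1+\beta)^{1+\beta}}\right)^{\mu}.
\end{equation*}

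The remaining step, and the one I expect to be the main technical obstacle, is purely analytic: I must show that for $\beta \in (0,1]$ one has $(1+\beta)\ln(1+\beta) - \beta \geq \beta^{2}/3$, which is equivalent to the target bound $\exp(-\beta^{2}\mu/3)$. I would verify this by defining $f(\beta) := (1+\beta)\ln(1+\beta) - \beta - \beta^{2}/3$, observing $f(0) = 0$, and computing $f'(\beta) = \ln(1+\beta) - 2\beta/3$ and $f''(\beta) = 1/(1+\beta) - 2/3$. A short case analysis shows $f' \geq 0$ on $[0,1]$ (using $f'(0)=0$ together with the sign pattern of $f''$, which is positive on $[0,1/2]$ and becomes negative afterwards but $f'(1) = \ln 2 - 2/3 > 0$), so $f$ is non-decreasing and hence non-negative throughout $(0,1]$. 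The restriction $\beta \leq 1$ enters only in this last monotonicity check and is where the constant $1/3$ in the exponent is paid for.
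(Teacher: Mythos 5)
Your proof is correct, and it is the canonical derivation of this bound: Markov's inequality applied to $e^{tX}$, factorisation via independence, the bound $1+y\leq e^y$, optimisation at $t=\ln(1+\beta)$, and finally the analytic inequality $(1+\beta)\ln(1+\beta)-\beta\geq \beta^2/3$ on $(0,1]$, whose verification via $f'(0)=0$, the single sign change of $f''$, and $f'(1)=\ln 2-2/3>0$ is sound. The paper itself gives no proof of this statement --- it is quoted as a known Chernoff--Hoeffding bound with a citation to McDiarmid --- so there is nothing to compare against beyond noting that your argument is exactly the standard one that the cited source uses.
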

\vskip 0.2cm\noindent
%{\em Concentration Inequalities.}
A further useful concentration theorem we will use is the
Chebychev-Cantelli inequality:
\begin{theorem}[see \cite{MR}, page 64]\label{Che-Can}
Let $X$ be a non-negative random variable with finite mean $\mathbb{E}(X)$
and variance {\rm Var}$(X)$. Then for any $a>0$ it holds that
\begin{eqnarray*}
 \Pr(X\leq \mathbb{E}(X)-a)&\leq & \frac {{\rm Var} (X)}{{\rm Var}(X)+a^2}\cdot
\end{eqnarray*}
\end{theorem}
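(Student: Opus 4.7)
The plan is to derive the one-sided Chebyshev (Cantelli) bound from Markov's inequality applied to a suitably shifted and squared version of the centered variable $X-\mathbb{E}(X)$, then optimize the shift.

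First I would center the problem by setting $Y := \mathbb{E}(X) - X$, so that $\mathbb{E}(Y) = 0$ and $\mathrm{Var}(Y) = \mathrm{Var}(X)$. The inequality to prove becomes
\[
\Pr(Y \geq a) \leq \frac{\mathrm{Var}(X)}{\mathrm{Var}(X) + a^{2}}.
\]
The key observation is that squaring is not monotone on the reals, so a raw application of Markov to $Y^{2}$ only yields the two-sided Chebyshev bound $\mathrm{Var}(X)/a^{2}$. To exploit the one-sidedness of the event $\{Y \geq a\}$, I would introduce a free parameter $t > 0$ and note that on this event we have $Y + t \geq a + t > 0$, hence $(Y+t)^{2} \geq (a+t)^{2}$. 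This turns the one-sided tail into a non-negative event to which Markov's inequality applies cleanly.

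The next step is the computation
\[
\Pr(Y \geq a) \;\leq\; \Pr\bigl((Y+t)^{2} \geq (a+t)^{2}\bigr) \;\leq\; \frac{\mathbb{E}\bigl[(Y+t)^{2}\bigr]}{(a+t)^{2}} \;=\; \frac{\mathrm{Var}(X) + t^{2}}{(a+t)^{2}},
\]
using $\mathbb{E}(Y) = 0$ to expand the numerator as $\mathrm{Var}(Y) + t^{2}$. This bound is valid for every $t > 0$, so I would minimize the right-hand side over $t$. Differentiating the rational function and setting the numerator of the derivative to zero gives $t \cdot (a+t) = \mathrm{Var}(X) + t^{2}$, i.e.\ $t^{\ast} = \mathrm{Var}(X)/a$. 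Plugging $t^{\ast}$ back in yields
\[
\frac{\mathrm{Var}(X) + \mathrm{Var}(X)^{2}/a^{2}}{(a + \mathrm{Var}(X)/a)^{2}} \;=\; \frac{\mathrm{Var}(X)\bigl(a^{2} + \mathrm{Var}(X)\bigr)/a^{2}}{\bigl(a^{2} + \mathrm{Var}(X)\bigr)^{2}/a^{2}} \;=\; \frac{\mathrm{Var}(X)}{\mathrm{Var}(X) + a^{2}},
\]
which is exactly the claimed bound.

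The only real obstacle is guessing the shift-and-square trick; once the shift parameter $t$ is introduced the rest is routine Markov plus a one-variable optimization. I would also remark that the non-negativity hypothesis on $X$ is not actually used in this proof: only the existence of $\mathbb{E}(X)$ and $\mathrm{Var}(X)$ matter, which is the usual statement of Cantelli's inequality.
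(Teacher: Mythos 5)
Your proof is correct: the shift-and-square argument, Markov's inequality applied to $(Y+t)^2$, and the optimization $t^{\ast}=\mathrm{Var}(X)/a$ are all carried out accurately, and this is the standard derivation of Cantelli's inequality. The paper itself gives no proof of this theorem --- it is quoted as a known result from Motwani and Raghavan \cite{MR} --- so there is nothing to compare against; your closing remark that non-negativity of $X$ is not needed is also accurate.
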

\begin{definition} 
let ${\bf k}\in \mathbb{N}_{0}^{n}$.
A ${\bf k}$-matching in ${\cal H}$ is a set $M\subseteq {\cal E}$
such that in no vertex $v_{i}$ of ${\cal H}$ more than $k_{i}$, $i\in [n]$ hyperedges from $M$ are incident. The ${\bf k}$-matching problem is to find a maximum cardinality ${\bf k}$-matching.
$\nu_{{\bf k}}({\cal H})$ denotes this maximum cardinality, and is called the ${\bf k}$-matching number of ${\cal H}$.
\end{definition}

%Note that the problem is $\rm{MAXSNP}$-hard \cite{EG12}.
We need the following duality theorem from combinatorics:
\begin{theorem}[Ray-Chaudhuri, 1960 \cite{B73}]\label{RC}
Consider a hypergraph $\mathcal{H}=(V,\,\mathcal{E})$ with vertex degree $d(v)$ for every $v\in V$.
let ${\bf b}, {\bf k} \in \mathbb{N}_{0}^{n}$ such that for every $v\in V$, $b_{v}+k_{v}=d(v)$.
A subset of hyperedges $M_{{\bf k}}$ is a ${\bf k}$-matching in $\mathcal{H}$ if and only if the
subset $S_{{\bf b}}:=\mathcal{E}\setminus M_{{\bf k}}$ is a ${\bf b}$-set multicover in $\mathcal{H}$. Furthermore $M_{{\bf k}}$ is of
maximum cardinality if and only if $S_{{\bf b}}$ is of minimum cardinality.
\end{theorem}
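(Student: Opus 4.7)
The plan is to derive both directions of the equivalence simultaneously from the trivial degree identity that, for every vertex $v$ and every set $M \subseteq \mathcal{E}$, the edges of $\mathcal{H}$ incident to $v$ partition into those in $M$ and those in $\mathcal{E}\setminus M$. Concretely, writing $d_M(v):=|\{E\in M : v\in E\}|$ and $d_{\mathcal{E}\setminus M}(v)$ analogously, I would record
\begin{equation*}
d_M(v) + d_{\mathcal{E}\setminus M}(v) \;=\; d(v) \;=\; k_v + b_v
\qquad \text{for every } v\in V.
\end{equation*}
From here the biconditional is almost immediate: $M_{\mathbf{k}}$ is a $\mathbf{k}$-matching iff $d_{M_{\mathbf{k}}}(v)\le k_v$ for all $v$, which by the displayed identity is equivalent to $d_{\mathcal{E}\setminus M_{\mathbf{k}}}(v)\ge d(v)-k_v=b_v$ for all $v$, i.e.\ to $S_{\mathbf{b}}=\mathcal{E}\setminus M_{\mathbf{k}}$ being a $\mathbf{b}$-set multicover. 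I would write out both implications explicitly to make clear that the correspondence $M_{\mathbf{k}}\leftrightarrow S_{\mathbf{b}}$ is a bijection between $\mathbf{k}$-matchings and $\mathbf{b}$-multicovers.

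For the second assertion I would exploit the fact that $|M_{\mathbf{k}}|+|S_{\mathbf{b}}|=|\mathcal{E}|=m$ is a constant independent of the chosen $M_{\mathbf{k}}$. Under the bijection from the first part, maximizing $|M_{\mathbf{k}}|$ over all $\mathbf{k}$-matchings is therefore equivalent to minimizing $|S_{\mathbf{b}}|=m-|M_{\mathbf{k}}|$ over all $\mathbf{b}$-multicovers, which yields the stated extremal correspondence $\nu_{\mathbf{k}}(\mathcal{H})+\mathrm{Opt}=m$ as a by-product.

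\emph{Main obstacle.} There is no real technical obstacle: once the degree identity $d(v)=k_v+b_v$ is exploited vertex by vertex, both statements reduce to elementary counting. The only care needed is to check that the argument works in the degenerate cases $k_v=0$ (the vertex must be saturated, i.e.\ every incident edge lies in $S_{\mathbf{b}}$) and $b_v=0$ (the multicover constraint at $v$ is vacuous and every incident edge may lie in $M_{\mathbf{k}}$), both of which are handled uniformly by the same identity. I would therefore keep the write-up short, roughly a paragraph per direction, and note explicitly that the complementation $M_{\mathbf{k}}\mapsto \mathcal{E}\setminus M_{\mathbf{k}}$ is the inverse bijection used in the later deterministic algorithm of Section~4.
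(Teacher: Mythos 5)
Your argument is correct and complete: the pointwise degree identity $d_{M}(v)+d_{\mathcal{E}\setminus M}(v)=d(v)=k_v+b_v$ does immediately yield the biconditional, and the constant sum $|M_{\mathbf{k}}|+|S_{\mathbf{b}}|=m$ settles the extremal claim. Note, however, that the paper itself supplies no proof of this theorem --- it is quoted as a known duality result of Ray-Chaudhuri with a citation to Berge's book --- so there is nothing in the text to compare against; your write-up is simply a clean, self-contained verification of the cited fact, and your closing observation that $\nu_{\mathbf{k}}(\mathcal{H})+\mathrm{Opt}=m$ is exactly the identity the paper later uses in the proof of Lemma 5.
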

{\bf Remark 1}. Note that Theorem \ref{RC} holds also for hypergraphs with multi-sets i.e., hypergraphs with multiple hyperedges.\\

\section{ The randomized rounding algorithm }
%\subsection{ Notations }
Let ${\cal H}=(V,{\cal E})$ be a hypergraph with maximum vertex degree $\Delta$ and maximum edge size $\ell$.
An integer, linear programming formulation of \sbmultcov problem is the following:
\begin{eqnarray*}
 &&\min \sum_{j=1}^m x_j,\\
% \vspace{0,2cm}
\mbox{ILP}(\Delta, {\bf b}):\qquad  &&\sum_{j=1}^m a_{ij}x_j\geq b_{i} \quad
 \mbox{ for all } i \in [n],\\
%   \vspace{0,2cm}
  &&x_j\in \{0,1\}\quad\mbox{ for all } j\in [m],
  \end{eqnarray*}
where $ A=(a_{ij})_{i \in [n], \,j \in [m]}\in \{0,1\}^{n \times m}$
is the vertex-edge incidence matrix of ${\cal H}$
and ${\bf b}=(b_1,b_2,\dots,b_n) \in \mathbb{N}_{\geq 2}^{n}$
is the given integer vector.
We define $b:=\min_{i\in [n]}b_{i}$ and $\delta=\Delta-b+1$.\\
The linear programming relaxation LP($\Delta,\,{\bf b}$)
of ILP($\Delta,\,{\bf b}$) is given by relaxing the integrality constraints
to  $x_j\in [0,1] \;$for all $j \in [m]$.
Let $\mathrm{Opt}$ resp. ${\rm Opt}^{*}$
be the value of an optimal solution to ILP($\Delta,\,{\bf b}$)
resp. LP($\Delta,\,{\bf b}$).
Let $(x^{\ast}_{1},\cdots,x^{\ast}_{m})$ be the optimal solution of the LP($\Delta,\,{\bf b}$).
So ${\rm Opt}^{*} =\sum_{j=1}^mx^{*}_{j}$
and ${\rm Opt}^{*} \leq \mathrm{Opt}$.

The next lemma shows that the $b_i$ greatest  values of the LP variables correspondent to the incident edges  for any vertex $v_i$ are all  greater than or equal to $\frac{1}{\delta}$.
\begin{lemma} [see \cite{PSW93}] \label{lemma:1}
Let $b_i,d,\Delta, n \in\mathbb{N} $ with $2\leqslant b_i\leqslant d-1\leqslant \Delta -1, i\in [n]$ .  Let $x_j\in [0,1],j\in [d]$,  such that   $\displaystyle  \sum_{j=1}^{d}x_j\geqslant b_i$.
Then at least $b_i$ of the $x_j$ fulfill the inequality $x_j\geqslant \frac{1}{\delta}$.
\end{lemma}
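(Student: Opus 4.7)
The plan is to argue by contradiction. Assume that strictly fewer than $b_i$ of the values $x_1,\dotsc,x_d$ satisfy $x_j\ge 1/\delta$; equivalently, at least $d-b_i+1$ of them satisfy $x_j<1/\delta$, while the remaining at most $b_i-1$ variables satisfy only the trivial bound $x_j\le 1$. The goal is to show that this forces the total sum to be strictly less than $b_i$, contradicting the hypothesis $\sum_{j=1}^d x_j\ge b_i$.

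Concretely, I would split the sum according to this partition. The ``large'' part contributes at most $(b_i-1)\cdot 1 = b_i-1$, and the ``small'' part contributes strictly less than $(d-b_i+1)\cdot\tfrac{1}{\delta}$. To finish, I need $(d-b_i+1)/\delta\le 1$, i.e., $d-b_i+1\le\delta$. Using the global definition $\delta=\Delta-b+1$ together with the given bounds $d\le\Delta$ and $b\le b_i$, we have
\begin{equation*}
d - b_i + 1 \;\le\; \Delta - b + 1 \;=\; \delta,
\end{equation*}
so $(d-b_i+1)/\delta\le 1$. Combining the two parts yields
\begin{equation*}
\sum_{j=1}^{d} x_j \;<\; (b_i-1) + \frac{d-b_i+1}{\delta} \;\le\; (b_i-1)+1 \;=\; b_i,
\end{equation*}
contradicting the assumption $\sum_{j=1}^d x_j\ge b_i$.

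The argument is essentially a pigeonhole/averaging step, so there is no deep obstacle; the only care needed is in the bookkeeping of strict versus non-strict inequalities. The strictness on the ``small'' side (each such $x_j$ is strictly below $1/\delta$) is what converts the non-strict bound $(d-b_i+1)/\delta\le 1$ into the strict overall inequality $\sum x_j<b_i$ required for a contradiction. The hypothesis $b_i\le d-1$ ensures that $d-b_i+1\ge 2$, so the small class is nonempty and the argument is nontrivial, while $b_i\ge 2$ is only used downstream by the calling algorithm, not in this counting step itself.
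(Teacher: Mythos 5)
Your argument is correct. Note first that the paper gives no proof of this lemma at all: it is imported from Peleg--Schechtman--Wool with the tag ``see [PSW93]''. The closest in-paper comparison is the proof of Lemma~\ref{lemma:2}, which handles the analogous claim by sorting $x_1\ge\dots\ge x_d$ and estimating directly ($\sum_{j\le b_i-2}x_j\le b_i-2$ and $\sum_{j\ge b_i-1}x_j\le (d-b_i+2)x_{b_i-1}$, then solving for $x_{b_i-1}$). Your proof by contradiction is the contrapositive of exactly that counting: assuming fewer than $b_i$ variables reach $1/\delta$ and bounding the two classes separately is the same decomposition as bounding the top block by $1$ and the tail by its largest element, and both hinge on the same inequality $d-b_i+1\le \Delta-b+1=\delta$. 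So the routes are interchangeable; the direct version has the minor advantage of producing the threshold $1/(d-b_i+1)\ge 1/\delta$ explicitly rather than just verifying it. One small bookkeeping remark: your bound ``large part $\le (b_i-1)\cdot 1$ plus small part $<(d-b_i+1)/\delta$'' is stated for the extremal split with exactly $b_i-1$ large variables; if more than $d-b_i+1$ variables fall below $1/\delta$, the displayed pair of bounds does not literally apply, but the conclusion still holds because $k\mapsto k+(d-k)/\delta$ is nondecreasing in the number $k\le b_i-1$ of large variables (as $1\ge 1/\delta$), so the extremal case is the worst case. It would be worth adding that one sentence; otherwise the proof is complete.
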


Our second lemma shows that the $b_i-1$ greatest  values of the LP variables correspondent to the incident edges  for any vertex $v_i$ are all  greater than or equal to $\frac{2}{\delta+1}$ and with lemma \ref{lemma:1} we summarize about the $b_i$ greatest  values of the LP variables correspondent to the incident edges  for any vertex $v_i$.
\begin{lemma}\label{lemma:2}
Let $b_i,d,\Delta, n \in\mathbb{N} $ with $2\leqslant b_i\leqslant d-1\leqslant \Delta -1, i\in [n]$ .  Let $x_j\in [0,1],j\in [d]$,  such that   $\displaystyle  \sum_{j=1}^{d}x_j\geqslant b_i$.
Then at least $b_{i}-1$ of the $x_j$ fulfill the inequality $x_j\geqslant \frac{2}{\delta+1}$ and exists an element $x_j$ distinct of them all who fulfill the inequality $x_j\geqslant \frac{1}{\delta}$ .
\end{lemma}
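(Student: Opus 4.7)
\medskip
\noindent\textbf{Proof plan for Lemma \ref{lemma:2}.}
The plan is to reduce the second assertion directly to Lemma \ref{lemma:1} and to prove the first assertion by a one-line contradiction argument that mirrors the proof of Lemma \ref{lemma:1}, only with a slightly tighter accounting of mass. First I would reorder the variables so that $x_1\ge x_2\ge\cdots\ge x_d$; since the statement only counts how many coordinates lie above a threshold, this is w.l.o.g. In these terms, showing ``at least $b_i-1$ of the $x_j$ satisfy $x_j\ge\frac{2}{\delta+1}$'' amounts to showing $x_{b_i-1}\ge\frac{2}{\delta+1}$, and the ``distinct element satisfying $x_j\ge\frac{1}{\delta}$'' will be $x_{b_i}$, whose existence is already furnished by Lemma \ref{lemma:1}.

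For the main inequality $x_{b_i-1}\ge\frac{2}{\delta+1}$, I would assume the opposite, $x_{b_i-1}<\frac{2}{\delta+1}$, and then bound the total mass. By monotonicity this forces $x_j<\frac{2}{\delta+1}$ for every $j\ge b_i-1$, i.e.\ for $d-b_i+2$ indices, while the remaining $b_i-2$ coordinates are bounded by $1$. This gives the strict inequality
\begin{equation*}
\sum_{j=1}^{d}x_j \;<\; (b_i-2)\cdot 1 \,+\, (d-b_i+2)\cdot\frac{2}{\delta+1}.
\end{equation*}
The crucial observation is then that $d-b_i+2\le\delta+1$: indeed $d\le\Delta$ and $b_i\ge b$, so $d-b_i+1\le\Delta-b+1=\delta$. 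Substituting this into the display bounds the right-hand side by $(b_i-2)+2=b_i$, which contradicts the hypothesis $\sum_{j=1}^{d}x_j\ge b_i$. Hence $x_{b_i-1}\ge\frac{2}{\delta+1}$, so the top $b_i-1$ coordinates $x_1,\ldots,x_{b_i-1}$ all satisfy the required threshold.

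For the second assertion, Lemma \ref{lemma:1} applied to the same configuration yields at least $b_i$ coordinates with value $\ge\frac{1}{\delta}$; in the sorted order these are exactly $x_1,\ldots,x_{b_i}$. The coordinate $x_{b_i}$ is then automatically distinct from the $b_i-1$ coordinates produced above and satisfies $x_{b_i}\ge\frac{1}{\delta}$, as required. I do not anticipate a serious obstacle here; the only subtle point is the strictness of the inequality in the contradiction step, which must come from the strict hypothesis $x_{b_i-1}<\frac{2}{\delta+1}$ (and is valid because $d-b_i+2\ge 3>0$, so at least one strictly-less-than term actually appears in the sum).
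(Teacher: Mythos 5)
Your proposal is correct and is essentially the paper's own argument in contrapositive form: both sort the coordinates, split the sum at index $b_i-2$, and use $d-b_i+2\le\Delta-b+2=\delta+1$ to force $x_{b_i-1}\ge\frac{2}{\delta+1}$, with the second assertion delegated to Lemma \ref{lemma:1} exactly as the paper does. The only cosmetic difference is that you argue by contradiction where the paper derives the bound directly; your attention to the strictness of the inequality is a fine detail the direct version does not even need.
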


\begin{proof}
W.L.O.G  we suppose $x_1\geq x_2\geq \dots \geq x_{b_i }\geq \dots\geq x_d$.\\
Hence $ b_i-2\geq   \displaystyle  \sum_{j=1}^{b_i-2}x_j$
and  $ (d -b_i+2) x_{b_{i-1}}\geq   \displaystyle  \sum_{j=b_i-1}^{d}x_j$\\
Then
\begin{eqnarray*}
b_i-2+ (\Delta -b+2) x_{b_i-1} &\geq &b_i-2+ (\Delta -b_i+2) x_{b_i-1}\\
&\geq &b_i-2+ (d -b_i+2) x_{b_i-1}\\
  &\geq&  \sum_{j=1}^{b_i-2}x_j+  \sum_{j=b_i-1}^{d}x_j 
 =  \displaystyle  \sum_{j=1}^{d}x_j \\
 &\geq & b_i
\end{eqnarray*}
So we have
$x_{b_i-1}\geq \frac{2}{\delta+1}$\\
Since for all $j\in [b_i-1]\; ; \ x_{j} \geq x_{b_i-1 }$
then for all $j\in [b_i-1]\;  ;\;  x_{j}\geq \frac{2}{\delta+1}$.\\
Furthermore by lemma $1$  and the assumption on the orders of the  variables $x_j$, for all $j\in [b_i]\;$  we have  $x_{j}\geq \frac{1}{\delta}$
and particularly 
$x_{b_i}\geq \frac{1}{\delta}$.
\end{proof}
\begin{corollary}\label{corollarystricte}
Let $\mathcal{H}$ be a hypergraph and $(x^{\ast}_{1},\cdots,x^{\ast}_{m})$ be the optimal solution of the LP($\Delta,\,{\bf b}$). Then
$C:=\{E_j \in \mathcal{E} \mathrel{|} x_j^{\ast}\geq\frac{1}{\delta}\}$ is a set multicover  such that $|C|<\delta \mathrm{Opt}$.
\end{corollary}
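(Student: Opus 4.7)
The plan is to prove the two assertions separately: first that $C$ is a feasible set multicover, then that $|C|$ is strictly less than $\delta\cdot\mathrm{Opt}$. Both follow from Lemma~\ref{lemma:1} combined with a direct LP-counting estimate, so no new machinery is required.

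For feasibility, I would fix an arbitrary vertex $v_i \in V$ and restrict attention to the LP variables $\{x_j^{\ast} : v_i \in E_j\}$, which correspond to the $d(v_i)$ edges incident to $v_i$. The LP constraint at $v_i$ reads $\sum_{j:\, v_i \in E_j} x_j^{\ast} \geq b_i$. There are two cases. If $d(v_i) = b_i$, then $x_j^{\ast} = 1$ for every edge $E_j$ incident to $v_i$ (since each is at most $1$ and they sum to at least $b_i$), and all such edges trivially lie in $C$. If $d(v_i) > b_i$, then the hypotheses of Lemma~\ref{lemma:1} are met with $d = d(v_i) \le \Delta$, so at least $b_i$ of the incident variables satisfy $x_j^{\ast} \geq \frac{1}{\delta}$. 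Either way, $v_i$ is contained in at least $b_i$ edges of $C$, proving that $C$ is a set multicover.

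For the cardinality bound, the key observation is that every edge $E_j \in C$ contributes at least $\tfrac{1}{\delta}$ to the LP objective. Summing only over $E_j \in C$ gives
\begin{equation*}
\frac{|C|}{\delta} \;\le\; \sum_{E_j \in C} x_j^{\ast} \;\le\; \sum_{j=1}^{m} x_j^{\ast} \;=\; \mathrm{Opt}^{\ast} \;\le\; \mathrm{Opt}.
\end{equation*}
To upgrade this to the strict inequality $|C| < \delta\cdot\mathrm{Opt}$, I would argue as follows. If $\mathrm{Opt}^{\ast} < \mathrm{Opt}$ (the generic fractional case), strictness is immediate from the chain above. Otherwise $\mathrm{Opt}^{\ast} = \mathrm{Opt}$; then the optimal LP solution must have some $x_j^{\ast} < 1$, for otherwise the trivial integral solution would already match $\mathrm{Opt}^{\ast}$, and one can argue by a small perturbation or by exhibiting at least one vertex $v_i$ whose incident variables do not all equal $\tfrac{1}{\delta}$ that $\sum_{E_j\in C} x_j^{\ast}$ is strictly less than $\sum_j x_j^{\ast}$, delivering the strict gap.

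The main obstacle will be securing the strict inequality cleanly; the weak inequality $|C| \le \delta\cdot\mathrm{Opt}$ is immediate, but the strict version requires a careful case distinction between ``integral'' and ``fractional'' LP optima (or equivalently, between instances where $C$ already equals the support of $x^{\ast}$ and those where some edge has $0 < x_j^{\ast} < \tfrac{1}{\delta}$). The feasibility half of the proof, by contrast, is a direct and essentially routine application of Lemma~\ref{lemma:1}.
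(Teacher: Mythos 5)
Your feasibility argument is fine and matches the paper's (your explicit treatment of the degenerate case $d(v_i)=b_i$, where Lemma~\ref{lemma:1} does not formally apply, is actually more careful than the paper's one-line appeal to that lemma). The weak bound $|C|\le\delta\,\mathrm{Opt}$ is also correct. The genuine gap is in your strictness argument. Your plan is to show $\sum_{E_j\in C}x_j^{\ast}<\sum_{j}x_j^{\ast}$, i.e.\ to exhibit positive LP mass on edges outside $C$; but such edges need not exist. If the LP optimum happens to be integral, every edge in the support has $x_j^{\ast}=1\ge\frac{1}{\delta}$, so $C$ \emph{is} the support, the two sums coincide, and $\mathrm{Opt}^{\ast}=\mathrm{Opt}$ --- both branches of your case distinction then fail to produce a strict gap. (The conclusion still holds in that case, since $|C|=\mathrm{Opt}<\delta\,\mathrm{Opt}$ for $\delta>1$, but your argument does not see this.) Moreover, ``a vertex whose incident variables do not all equal $\frac{1}{\delta}$'' does not imply the existence of an edge with $0<x_j^{\ast}<\frac{1}{\delta}$, which is what your inequality actually needs.

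The strictness in the paper comes from a different source: Lemma~\ref{lemma:2} guarantees that every vertex has at least $b_i-1\ge 1$ incident edges with $x_j^{\ast}\ge\frac{2}{\delta+1}$, so the subset $C_1:=\{E_j\mid x_j^{\ast}\ge\frac{2}{\delta+1}\}$ is non-empty and each of its edges contributes strictly more than $\frac{1}{\delta}$ to the LP objective. Splitting $C=C_1\cup C_2$ with $C_2:=\{E_j\mid\frac{1}{\delta}\le x_j^{\ast}<\frac{2}{\delta+1}\}$, one gets $\delta\,\mathrm{Opt}^{\ast}\ge\frac{2\delta}{\delta+1}|C_1|+|C_2|>|C_1|+|C_2|=|C|$, because $\frac{2\delta}{\delta+1}>1$ and $|C_1|\ge 1$. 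You would need to import this refined threshold (or some equivalent per-edge slack above $\frac{1}{\delta}$) to close your proof; the uniform lower bound $x_j^{\ast}\ge\frac{1}{\delta}$ alone can only ever yield the non-strict inequality.
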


\begin{proof}
 Clearly with lemma  \ref {lemma:1}, $C$ is a feasible set multicover.
 \\
Let $C_1:=\{E_j \in \mathcal{E} \mathrel{|} x_j^{\ast}\geq\frac{2}{\delta +1}\}$
and $C_2:=\{E_j\in \mathcal{E}\mathrel{|}\frac{1}{\delta} \leq x_j^{\ast}<\frac{2}{\delta +1}\}$.\\
Note that 
$C=C_1\cup C_2$ and $ C_1\cap C_2=\emptyset $, so we have $|C_1|+|C_2|=|C|$.
\\
Let $ E_j\in C_1\ $ Then $  x^*_j  \geq \frac{2}{\delta+1}$.
%We may  suppose  $C_2\neq \emptyset$.
Since $\delta >1$ we have $ \frac{2\delta}{\delta+1}> 1$
 \\ Hence
\begin{equation*}
\delta {\rm Opt}^{*}= \sum_{j=1}^m\delta x^*_j
\geq  \displaystyle  \sum_{E_j\in C_1}\delta x^*_j+ \sum_{E_j\in C_2}\delta x^*_j
\end{equation*}
 From this we can immediately deduce
 \begin{equation}\label{inequality}
 \delta {\rm Opt}^{*}\geq  \frac{2\delta}{\delta+1}| C_1|+| C_2|
 \end{equation}
 Hence
 \begin{equation*}
 \delta \mathrm{Opt}^{*} > | C_1|+| C_2|= |C|
  \end{equation*}
Then
$$|C| < \delta \mathrm{Opt}.$$
\end{proof}

\subsection{The algorithm } 
In this section we present an algorithm with conditioned randomized rounding based on the properties satisfied by the two sets $C_1$ and $C_2$.

\begin{algorithm}
\label{alg:msetcover1}
\caption{ SET $b$-MULTICOVER}
\SetKwInOut{Input}{Input}\SetKwInOut{Output}{Output}
\Input{ a hypergraph $\mathcal{H}=(V,\, \mathcal{E})$ with maximum degree $\Delta$ and maximum hyperedge size $\ell$.\\
Let $b_i\in \mathbb{N}_{\geq 2}\text{ for }  i\in[n]\;$ ;  $\;  b:=\min_{i\in [n]}b_{i}$ and $\delta=\Delta-b+1$.}
\Output{A  set multicover $C$}
   \begin{enumerate}
     \item Initialize $C:=C_1=C_2=\emptyset$. Set $\lambda=\frac{\delta+1}{2}\;$ ;  $\alpha=\frac{(b-1)\delta e^{\frac{\delta}{4}}}{47\ell}$ and  $t=73$.
     \item  Obtain an optimal solution $x^* \in [0,1]^m$ by solving the LP($\Delta,\,{\bf b}$)  relaxation.
     \item \text{Set} $C_1:=\{E_j \in \mathcal{E} \mathrel{|} x_j^{\ast}\geq\frac{1}{\lambda}\}$ ,$\ C_2:=\{E_j \in \mathcal{E} \mathrel{|} \frac{1}{\lambda}> x_j^{\ast}\geq\frac{1}{\delta}\}$ \\
\text{and} $C_3:=\{E_j\in \mathcal{E}\mathrel{|} 0<x_j^{\ast}<\frac{1}{\lambda}\}$.
\item Take all edges of the set $C_1$ in the cover $C$.
\item if $|C_1|\geq \alpha \cdot \mathrm{Opt}^{*}$ or $t|C_1|\geq |C_2| $ then return $C=C_1\cup C_2$ . Else\\
 \begin{enumerate}
        \item \text{(Randomized Rounding) For all edges} $E_j\in C_3$
\text{include the edge} $E_j$ \text{in}\\
      \text{the cover} $C$,
\text{independently for all such} $E_j$, \text{with probability} $\lambda x_j^*$.
     \item \text{(Repairing) Repair the cover} $C$\text{(if necessary) as follows:}
     \text{Include arbitrary}\\ \text{edges from} $C_3$,
     \text{incident to vertices} $i\in [n]$ \text{ not covered by $b_i$ edges,}
     \text{to} $C$
     \text{until all}
     \text{vertices  are fully covered.}
     \item \text{Return the cover} $C$.
   \end{enumerate}
    \end{enumerate}
\end{algorithm}

Let us give a brief explanation of the ingredients of the algorithm SET $b$-MULTICOVER. \\
We start with an empty set C, which will be extended to a feasible set multicover. First we solve the LP-relaxation LP($\Delta,\,{\bf b}$) in polynomial time. Let $\alpha=\frac{(b-1)\delta e^{\frac{\delta}{4}}}{47\ell}$ and  $t=73$. The rest of the
action depends on the following two cases.\\
$\bullet$ If $|C_1|\geq \alpha \cdot \mathrm{Opt}^{*}$ or $t|C_1|\geq |C_2| $:
we pick in the cover $C$ all edges of the two sets $|C_1|$ and $|C_2|$.\\
Recall that by lemma \ref{lemma:1} the $C=C_1 \cup C_2$ is a feasible set multicover.

$\bullet$ If $|C_1|< \alpha \cdot \mathrm{Opt}^{*}$ and $t|C_1|<|C_2| $: we use LP-rounding with randomization on the edges of the set $C_3$, every edge of $C_3$ is independently picked in the cover with probability $\frac{\delta+1}{2} x_j^*$. To guarantee a feasible cover we proceed for a step of repairing.
\subsection{ Analysis of the algorithm} 

\textbf{Case $ \mathbf{|C_1|\geq \alpha \cdot \mathrm{Opt}^{*}}$ or $\mathbf{ t|C_1|\geq |C_2|} $ .}
\begin{theorem} \label{theoremcase1}
Let $\mathcal{H}$ be a hypergraph with maximum vertex degree $\Delta$ and maximum edge size $\ell$. Let $\alpha=\frac{(b-1)\delta e^{\frac{\delta}{4}}}{47\ell}$ and  $t=73$ as defined in algorithm \ref{alg:msetcover1}, if $|C_1|\geq \alpha \cdot \mathrm{Opt}^{*}$ or $t|C_1|\geq |C_2| $ , the algorithm \ref{alg:msetcover1} achieve a factor of $\left(1- \frac{(b-1) e^{\frac{\delta}{4}}}{94\ell} \right)\delta $ or $\frac{148}{149}\delta$ respectively.
\end{theorem}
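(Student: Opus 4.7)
The plan is to exploit the LP-slackness inequality (\ref{inequality}) already established in the proof of Corollary \ref{corollarystricte}, namely
\[
\delta\,\mathrm{Opt}^* \;\geq\; \frac{2\delta}{\delta+1}\,|C_1| + |C_2|,
\]
together with $C_1\cap C_2=\emptyset$, so that the returned cover has size $|C|=|C_1|+|C_2|$. Feasibility of $C=C_1\cup C_2$ is immediate from Lemma~\ref{lemma:1} since every vertex $v_i$ has at least $b_i$ incident variables with $x_j^{*}\ge \frac{1}{\delta}$, all of which lie in $C_1\cup C_2$. Rearranging the displayed inequality gives the master bound
\[
|C| \;\leq\; \delta\,\mathrm{Opt}^* - \frac{\delta-1}{\delta+1}\,|C_1|,
\]
on which both halves of the theorem will hinge. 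From here the two hypotheses are treated separately.

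For the first case, I substitute the assumption $|C_1|\geq \alpha\,\mathrm{Opt}^*$ directly into the master bound to obtain
\[
|C| \;\leq\; \delta\,\mathrm{Opt}^*\!\left(1-\frac{(\delta-1)\,\alpha}{\delta(\delta+1)}\right).
\]
Plugging in $\alpha=\frac{(b-1)\delta e^{\delta/4}}{47\ell}$ reduces the bracketed factor to $1-\frac{(\delta-1)(b-1)e^{\delta/4}}{47(\delta+1)\ell}$. Since $\frac{\delta-1}{\delta+1}\geq \frac{1}{2}$ for $\delta\geq 3$ (which is covered by the parameter range of interest; the $\delta=2$ case can be absorbed into the second bound), the coefficient dominates $\frac{(b-1)e^{\delta/4}}{94\ell}$, yielding the claimed ratio after using $\mathrm{Opt}^*\leq\mathrm{Opt}$.

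For the second case, the condition $73|C_1|\geq |C_2|$ gives $|C|=|C_1|+|C_2|\leq 74|C_1|$, i.e., $|C_1|\geq |C|/74$. Substituting this back into the master bound turns it into a self-referential estimate on $|C|$:
\[
|C|\left(1+\frac{\delta-1}{74(\delta+1)}\right)\leq \delta\,\mathrm{Opt}^*,
\]
and solving gives $|C|\leq \frac{74(\delta+1)}{75\delta+73}\,\delta\,\mathrm{Opt}^*$. A quick check that $\frac{74(\delta+1)}{75\delta+73}\leq \frac{148}{149}$ is equivalent to $74\delta\geq 222$, i.e., $\delta\geq 3$, completes this case.

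The substantive work is purely algebraic: the choices $\alpha$ of the threshold and $t=73$ of the ratio parameter are calibrated so that the two estimates meet at the claimed constants $\tfrac{1}{94}$ and $\tfrac{148}{149}$. Thus the only real obstacle is the bookkeeping on the constants — in particular, checking the inequality $\tfrac{74(\delta+1)}{75\delta+73}\leq \tfrac{148}{149}$ and verifying that $\tfrac{\delta-1}{\delta+1}\geq \tfrac12$ in the regime of interest. No further probabilistic or combinatorial ingredients enter; both bounds are deduced deterministically from (\ref{inequality}).
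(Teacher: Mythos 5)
Your proof is correct and follows essentially the same route as the paper: both cases are deduced deterministically from inequality (\ref{inequality}) via the master bound $|C|\leq \delta\,\mathrm{Opt}^{*}-\frac{\delta-1}{\delta+1}|C_1|$, using $|C|=|C_1|+|C_2|$ and $\frac{\delta-1}{\delta+1}\geq\frac{1}{2}$ for $\delta\geq 3$. The only cosmetic difference is that you carry the factor $\frac{\delta-1}{\delta+1}$ through to the end of the second case and verify $\frac{74(\delta+1)}{75\delta+73}\leq\frac{148}{149}$ afterwards, whereas the paper replaces it by $\frac{1}{2}$ immediately; the same unremarked reliance on $\delta\geq 3$ appears in both arguments.
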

\vskip 0.2cm\noindent
\begin{proof}
\textbf{Case} $\mathbf{|C_1|\geq \alpha \cdot \mathrm{Opt}^{*}}$. 
With the definition of the sets $C_1$ and  $C_2$ we have
\begin{eqnarray*}
 \delta \mathrm{Opt}^{*}= \sum_{j=1}^m\delta x^*_j
 &\geq & \displaystyle  \sum_{E_j\in C_1}\delta x^*_j+ \sum_{E_j\in C_2}\delta x^*_j \\
  &\geq &\frac{2\delta}{\delta+1}| C_1|+|C_2| \\
  &\geq &\frac{2\delta}{\delta+1}| C_1|+\left(|C|-|C_1| \right)\\
 &\geq &\frac{\delta-1}{\delta+1} |C_1|  + |C|\\
   &\overset{\delta \geq 3}{\geq} &\frac{1}{2} |C_1|  + |C|\\
   &\geq &\frac{1}{2}\alpha \cdot \mathrm{Opt}^{*}  + |C|
\end{eqnarray*}
Hence
$$|C|\leq  \left(1- \frac{ (b-1)e^{\frac{\delta}{4}}}{94\ell} \right)\delta \cdot \mathrm{Opt}^{*}$$

\textbf{Case} $ \mathbf{t|C_1|\geq|C_2|}$. We have
\begin{eqnarray*}
 t|C_1|& \geq &|C|-|C_1|
\end{eqnarray*}
Therefore
\begin{eqnarray*}
  |C_1|& \geq &\frac{1}{t+1}|C|
\end{eqnarray*}
Next with \ref{inequality} we have
\begin{eqnarray*}
 \delta \mathrm{Opt}^{*} &\geq &\frac{2\delta}{\delta+1}|C_1|+\left(|C|-|C_1| \right)\\
 &\geq &\frac{\delta-1}{\delta+1}|C_1|+|C|\\
  &\geq &\frac{\delta-1}{\delta+1}\times \frac{1}{t+1}|C|+|C|\\
  &\overset{\delta \geq 3}{\geq} & \frac{1}{2t+2}|C|+|C|
\end{eqnarray*}
Then
\begin{eqnarray*}
|C|&\leq & \frac{1}{1+ \frac{1}{2t+2}}\cdot \delta \mathrm{Opt}^{*}
 \\ &\leq &\tfrac{148}{149}\cdot \delta \mathrm{Opt}^{*}
\end{eqnarray*}
\end{proof} 
 
 \textbf{Case $ \mathbf{|C_1|< \alpha \cdot \mathrm{Opt}^{*}}$ and $\mathbf{ t|C_1|< |C_2|} $ .}
 
 Let $X_{1},...,X_{m}$ be $\{0,1\}$-random variables defined as follows:
\begin{eqnarray*}
 X_j=
 \begin{cases}
  1 &\text{if the edge}\, E_j \, \text{was picked into the cover
 before repairing}\\
  0 &\text{otherwise}.
 \end{cases}
\end{eqnarray*}
Note that the $X_1,...,X_m$ are independent for a given $x^* \in [0,1]^m$. 
For all  $i\in[n] $ we define the $\{0,1\}$- random variables $Y_{i}$ as follows:
\begin{eqnarray*}
 Y_{i}= 
 \begin{cases}
  1 &\text{if the vertex}~ v_{i}~\text{is fully covered before repairing}\\
  0 &\text{otherwise}.
 \end{cases}
\end{eqnarray*}
 We denote  $X:=\sum_{j=1}^m X_j$ and $Y:=\sum_{i=1}^n Y_i$ respectively the cardinality of the cover and the cardinality of vertices fully covered before the step of repairing. At this step by lemma \ref {lemma:2}, one more edge for each vertex is at most needed to be fully covered. The cover denoted by C obtained by the algorithm \ref{alg:msetcover1} is bounded by
\begin{equation}\label{expection}
\abs C \leq X+n-Y.
\end{equation}
\noindent Our goal by the next lemma is to estimate the expectation of the random variable $X$ so is the expectation and variance of the random variable $Y$ for the proof of the theorem \ref{Th5}. This is a restriction of Lemma $4$ in \cite{EMS16} to the last case in algorithm \ref{alg:msetcover1}.
\begin{lemma}\label{liteneq}
Let $l$ and $\Delta$ be the maximum size of an edge
resp. the maximum vertex degree, not necessarily constants.
Let $\alpha >0$, $t >0$ and $\lambda=\frac{\delta+1}{2}$
as in \mbox{Algorithm \ref{alg:msetcover1}}. In case $|C_1|< \alpha \cdot \mathrm{Opt}^{*}$ and $t|C_1|< |C_2| $ we have
\begin{description}
\item $\mathrm{(i)}$ $\mathbb{E}(Y)\geq (1-e^{-\lambda})n$.
\smallskip
\item $\mathrm{(ii)}$ ${\rm Var}(Y)\leq n^2\left(1-(1-e^{-\lambda})^2\right)$.
\smallskip
\item $\mathrm{(iii)}$
$1+\frac{t}{2}< \mathbb{E}(X)\leq \lambda \mathrm{Opt}^{*}$.
\smallskip
\item $\mathrm{(iv)}$
$ \frac{(b-1)n}{\alpha \ell} <  \mathrm{Opt}^{*}$.
\end{description}
\end{lemma}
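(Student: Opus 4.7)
The plan is to treat the four assertions essentially separately, leaning on the structural bound that by Lemma \ref{lemma:2} every vertex $v_i$ has at least $c_1(v_i) := |\Gamma(v_i) \cap C_1| \geq b_i - 1$ incident $C_1$-edges, combined with the LP-constraint $\sum_{E_j \in \Gamma(v_i)} x^*_j \geq b_i$. Part (iv) is pure double counting: summing $c_1(v_i) \geq b_i - 1$ over $i$ gives $n(b-1) \leq \sum_i c_1(v_i) = \sum_{E_j \in C_1} |E_j| \leq \ell\,|C_1|$, and the case hypothesis $|C_1| < \alpha\,\mathrm{Opt}^*$ then delivers (iv).

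For the upper bound in (iii), I would write $\mathbb{E}(X) = |C_1| + \lambda \sum_{E_j \in C_3} x^*_j$; using $x^*_j \geq 1/\lambda$ on $C_1$ gives $\sum_{E_j \in C_3} x^*_j \leq \mathrm{Opt}^* - |C_1|/\lambda$, which collapses $\mathbb{E}(X)$ to at most $\lambda \mathrm{Opt}^*$. For the lower bound, I exploit $C_2 \subseteq C_3$ with $x^*_j \geq 1/\delta$ on $C_2$ and $|C_2| > t|C_1|$ (the case hypothesis) to obtain $\mathbb{E}(X) \geq |C_1| + \lambda|C_2|/\delta > |C_1|(1 + \lambda t/\delta)$. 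Since Lemma \ref{lemma:2} forces $|C_1| \geq b - 1 \geq 1$ on any non-trivial instance and $\lambda/\delta = (\delta+1)/(2\delta) > 1/2$, one gets $\mathbb{E}(X) > 1 + t/2$.

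The main substance is (i). The key observation is that Lemma \ref{lemma:2} reduces the per-vertex covering event to a single Bernoulli-type question: either $c_1(v_i) \geq b_i$ and $Y_i = 1$ deterministically, or $c_1(v_i) = b_i - 1$, in which case $Y_i = 1$ iff at least one incident $C_3$-edge is selected. In the latter case, since $x^*_j \leq 1$ on $C_1$, the LP-constraint forces $\sum_{E_j \in C_3 \cap \Gamma(v_i)} x^*_j \geq 1$; by independence of the rounding and the bound $1 - u \leq e^{-u}$, this yields $\Pr[Y_i = 0] \leq \prod_{E_j \in C_3 \cap \Gamma(v_i)}(1-\lambda x^*_j) \leq e^{-\lambda}$. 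Summing over $i$ gives (i), and (ii) follows at once from $Y \leq n$ via $\mathrm{Var}(Y) \leq \mathbb{E}(Y^2) - \mathbb{E}(Y)^2 \leq n^2 - ((1-e^{-\lambda})n)^2 = n^2(1 - (1-e^{-\lambda})^2)$. The main hurdle is recognizing that Lemma \ref{lemma:2} is precisely what makes (i) attain the clean $e^{-\lambda}$ bound; without the guarantee $c_1(v_i) \geq b_i - 1$ one would be forced into a much weaker Chernoff lower-tail argument.
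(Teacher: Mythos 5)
Your proposal is correct and follows essentially the same route as the paper's proof: Lemma \ref{lemma:2} gives the dichotomy $|C_1\cap\Gamma(v_i)|\geq b_i$ or $=b_i-1$ with residual LP mass $\geq 1$ on $C_3\cap\Gamma(v_i)$, yielding $\Pr(Y_i=0)\leq e^{-\lambda}$ for (i), the trivial bound $Y\leq n$ for (ii), the identical LP manipulations (including $C_2\subseteq C_3$, $x_j^*\geq 1/\delta$ on $C_2$, and $|C_1|\geq 1$) for (iii), and the same degree/double-counting argument on the subhypergraph induced by $C_1$ for (iv). No gaps.
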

\begin{proof}
(i) Let $i \in [n]$, $r = d(i) - b_i + 1$.
If $\abs{C_1 \cap \Gamma(v_i)} \geq b_i$, then the vertex $v_i$ is fully covered and $\Pr(Y_{i}=0)=0$.
Otherwise we get by Lemma \ref {lemma:2}
$\abs{C_1 \cap \Gamma(v_i)} = b_i-1$ and $\sum_{E_j \in (\Gamma(v_i) \cap C_3)} x_j^* \geq 1$. Therefore
\begin{eqnarray*}
\Pr(Y_i = 0) &=& \prod_{E_j \in (\Gamma(v_i) \cap C_3)}(1- \lambda x_j^*) \\
&\leq&\prod_{E_j \in (\Gamma(v_i) \cap C_3)} e^{-\lambda x_j^* }
=  e^{-\lambda \sum_{E_j \in (\Gamma(v_i) \cap C_3)} x_j^* }\\
&\leq& e^{-\lambda} .
\end{eqnarray*}
Then
\begin{align*}
\mathbb{E}(Y) &= \sum_{i=1}^n \Pr(Y_i = 1)
= \sum_{i=1}^n (1 - \Pr(Y_i = 0))\\
&\geq \sum_{i=1}^n (1 - e^{-\lambda})\\
&\geq (1-e^{-\lambda})n.
\end{align*}

(ii)
 We have
\begin{eqnarray*}
Y=\sum_{i=1}^{n}Y_{i} & \Rightarrow & Y \leq n\\
& \Rightarrow & Y^2\leq n^2 \\
& \Rightarrow & \mathbb{E}(Y^2)\leq n^2.
\end{eqnarray*}
Then
\begin{align*}
{\rm Var}(Y) &= \mathbb{E}(Y^2)-\mathbb{E}(Y)^2
\leq  n^2 -(1-e^{-\lambda})^2n^2 \\
&\leq n^2\left(1-(1-e^{-\lambda})^2\right).
\end{align*}

(iii) 
By using the LP relaxation and the definition of the sets
$C_1$ and $C_3$, and since $\lambda x^*_j\geq 1$ for all ${E_j\in C_1}$, we get
\begin{eqnarray*}
\mathbb{E}(X)&=& |C_1|+
\sum_{E_j\in C_3}\lambda x^*_j\\
&\leq &\lambda\sum_{E_j\in C_1} x^*_j + \lambda\sum_{E_j\in C_3} x^*_j\\
&\leq & \lambda\sum_{E_j\in \mathcal{E}} x^*_j \\
&\leq &  \lambda \mathrm{Opt}^{*} .
\end{eqnarray*}

  Now we can get a lower bound for the expectation of $X$
\begin{eqnarray*}
\mathbb{E}(X)= |C_1|+\sum_{E_j\in C_3}\lambda x^*_j
& \overset{ C_2\subset C_3}{\geq} & |C_1|+\lambda \sum_{E_j\in C_2} x^*_j\\
& \geq & |C_1|+\lambda \sum_{E_j\in C_2} \frac{1}{\delta}\\
& \geq & |C_1|+ \frac{\lambda}{\delta}|C_2| \\
& > & |C_1|+ \frac{1}{2}|C_2| \\
& \overset{t|C_1| < |C_2|}{>} & |C_1|+ \frac{t}{2}|C_1| \\
& > & \left(1+ \frac{t}{2} \right) |C_1|\\
& \overset{ |C_1|\geq 1}{>} & 1+ \frac{t}{2}
\end{eqnarray*}
Therefore
\begin{eqnarray*}
1+ \frac{t}{2} < \mathbb{E}(X).
\end{eqnarray*}
%\smartqed\qed
(iv)
 Let us consider $\tilde{\mathcal{H}} $
   the subhypergraph induced by $C_1$ in witch degree equality gives\\
  \begin{equation*}\label{double-countingSsuperieur}
  \sum_{i\in V}d(i)=\sum_{E_j\in C_1}|E_j|.
\end{equation*}\\
Since the minimum vertex degree in the subhypergraph 
$\tilde{\mathcal{H}} $ is $b-1$ with $b:=\min_{i\in [n]}b_{i}$,
 we have
$$ (b-1)n\leq \sum_{i\in V}d(i)=\sum_{E\in C_1}|E_j|\leq \ell |C_1|$$
Therefore
\begin{equation*}
 \frac{(b-1)n}{\ell}\leq | C_1|.
\end{equation*}
 With  $|C_1|< \alpha \cdot \mathrm{Opt}^{*}$ we obtain
\begin{eqnarray*}
\frac{(b-1)n}{\alpha \ell} < \mathrm{Opt}^{*}.
\end{eqnarray*}
\end{proof}
\begin{theorem}\label{Th5}
Let $\mathcal{H}$ be a hypergraph with maximum vertex degree $\Delta$ and maximum edge size $\ell$. Let $\alpha=\frac{(b-1)\delta e^{\frac{\delta}{4}}}{47\ell}$  and $t=73$ as in algorithm \ref{alg:msetcover1}. In case $|C_1|< \alpha \cdot \mathrm{Opt}^{*}$ and $t|C_1|< |C_2| $, the algorithm \ref{alg:msetcover1} returns a set multicover $C$ such that 
$$|C| < \frac{15\delta+14}{20} \cdot \mathrm{Opt}$$
with probability greater than $0.53$.
\end{theorem}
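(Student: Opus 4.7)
My plan is to start from the deterministic bound $\abs C \leq X + n - Y$ from (\ref{expection}) and bound each of $X$ (from above) and $Y$ (from below) with constant probability, before combining them via a union bound on the two failure events.

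For the upper bound on $X$, Lemma \ref{liteneq}(iii) gives both $\mathbb{E}(X) \leq \lambda\,\mathrm{Opt}^{*}$ and the lower bound $\mathbb{E}(X) > 1 + t/2 = 37.5$. The Chernoff–Hoeffding inequality (Theorem \ref{AnVa}) then yields, for any $\beta \in (0,1)$, $\Pr[X > (1+\beta)\mathbb{E}(X)] \leq \exp(-\beta^{2}\mathbb{E}(X)/3) \leq \exp(-12.5\,\beta^{2})$, so a moderate $\beta$ (of order $\tfrac{2}{5}$) drives this probability below a small constant while keeping $(1+\beta)\lambda$ only slightly above $\tfrac{\delta+1}{2}$.

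For the lower bound on $Y$, I apply the Chebychev-Cantelli inequality (Theorem \ref{Che-Can}) with a deviation $a$ of order $n\,e^{-\lambda/2}$. Using $\mathbb{E}(Y)\geq(1-e^{-\lambda})n$ and $\mathrm{Var}(Y)\leq n^{2}(2e^{-\lambda}-e^{-2\lambda})$ from parts (i)--(ii) of Lemma \ref{liteneq}, this bound makes the failure probability a constant independent of $n$, and on the good event gives $n - Y \leq e^{-\lambda}n + a$. On the intersection of both good events I therefore obtain $\abs C \leq (1+\beta)\lambda\,\mathrm{Opt}^{*} + e^{-\lambda}n + a$.

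To convert $n$ into $\mathrm{Opt}^{*}$ I invoke Lemma \ref{liteneq}(iv), which in the present case reads $n < \tfrac{\alpha\ell}{b-1}\,\mathrm{Opt}^{*} = \tfrac{\delta\, e^{\delta/4}}{47}\,\mathrm{Opt}^{*}$. The point of the choice $\alpha = \tfrac{(b-1)\delta\, e^{\delta/4}}{47\ell}$ is that, because $\lambda = (\delta+1)/2$, the factor $e^{\delta/4}$ inside $\alpha$ cancels cleanly against $e^{-\lambda}$ (and against $e^{-\lambda/2}$ after taking the square root for Chebychev-Cantelli), so both $e^{-\lambda}n$ and $a$ are each bounded by an explicit small constant times $\delta\,\mathrm{Opt}^{*}$. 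Choosing $\beta$ so that $(1+\beta)\tfrac{\delta+1}{2}$ stays below $\tfrac{15\delta+14}{20}$ with enough slack to absorb those two terms then gives $\abs C < \tfrac{15\delta+14}{20}\,\mathrm{Opt}^{*} \leq \tfrac{15\delta+14}{20}\,\mathrm{Opt}$, using $\mathrm{Opt}^{*}\leq\mathrm{Opt}$.

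The hard part is the joint calibration of $\beta$ and the Chebychev-Cantelli deviation $a$: they simultaneously control the approximation ratio and the success probability, and the slack between $(1+\beta)\tfrac{\delta+1}{2}$ and $\tfrac{15\delta+14}{20}$ is only $\Theta(\delta/20)$, which must absorb $e^{-\lambda}n + a$ uniformly in $\delta\geq 3$. The specific constants $t=73$ in the algorithm (forcing $\mathbb{E}(X) > 37.5$) and the exponential factor $e^{\delta/4}$ inside $\alpha$ (providing exact cancellation with $e^{-\lambda}$) are engineered precisely so that a single choice of $\beta\approx 2/5$ and a matching $a$ make the Chernoff and Chebychev-Cantelli failure probabilities sum to less than $0.47$, leaving the required success probability above $0.53$.
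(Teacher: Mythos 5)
Your proposal is correct and follows essentially the same route as the paper's own proof: the decomposition $\abs{C}\leq X+n-Y$, a Chernoff bound on $X$ with $\beta=\tfrac{2}{5}$ (giving failure probability $e^{-2}$ via $\mathbb{E}(X)>1+\tfrac{t}{2}$), a Chebychev--Cantelli bound on $Y$ with deviation $\gamma=2ne^{-\lambda/2}$ (giving failure probability $\tfrac{1}{3}$), conversion of $ne^{-\lambda}+\gamma$ into $\tfrac{\delta}{20}\mathrm{Opt}^{*}$ via Lemma \ref{liteneq}(iv) and the cancellation of $e^{\delta/4}$ in $\alpha$, and a union bound yielding success probability at least $1-(\tfrac{1}{3}+e^{-2})>0.53$. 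The only work left implicit is writing out the explicit constant in the Chebychev--Cantelli deviation, which you correctly identify as being of order $ne^{-\lambda/2}$.
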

\vskip 0.2cm\noindent
\begin{proof}
Suppose $|C_1|< \alpha \cdot \mathrm{Opt}^{*}$
such that $\alpha=\frac{(b-1)\delta e^{\frac{\delta}{4}}}{47\ell}$.With $\gamma=2ne^{-\frac{\lambda}{2}}$ we have
\begin{eqnarray*}
\Pr\left(Y \leq n(1- e^{-\lambda})-\gamma \right)
& \leq & \Pr\left(Y \leq \mathbb{E}(Y)-\gamma \right)\\
& \overset{\textrm{Th } \ref{Che-Can}}{\leq} & \frac{{\rm Var}(Y)}{ {\rm Var}(Y)+\gamma^2}\\
& \leq & \frac{1}{ 1+\frac{\gamma^2}{{\rm Var}(Y)}}\\
& \leq & \frac{1}{ 1+\frac{\gamma^2}{n^2\left(1-(1-e^{-\lambda})^2\right)}}\\
& \leq & \frac{1}{ 1+\frac{\gamma^2}{n^2\left( 2e^{-\lambda}-e^{-2\lambda} \right)}}\\
& \leq & \frac{1}{1+\frac{ \gamma^2}{2n^2e^{-\lambda}}}\\
& \leq & \frac{1}{1+2}\\
 & \leq & \frac{1}{3}
\end{eqnarray*}
Choosing $\beta=\frac{2}{5}$ and $t=73$ we obtain
\begin{eqnarray*}
\Pr\left(X \geq \frac{7}{10}\left(\delta+1\right)\mathrm{Opt}^{*} \right)&=&
\Pr\left(X \geq (1 + \beta)\cdot\frac{\delta+1}{2} \mathrm{Opt}^{*} \right)\\
&=&\Pr\left(X \geq (1 + \beta)\cdot\lambda \mathrm{Opt}^{*} \right)\\
&\overset{\textrm{Lem } \ref{liteneq} (iii)}{\leq}&
\Pr\left(X \geq (1+\beta) \mathbb{E}(X)\right)\\
&\overset{\textrm{Th } \ref{AnVa}}{\leq}&
\exp\left(- \frac{\beta^2 \mathbb{E}(X)}{3}\right)\\
&\overset{\textrm{Lem } \ref{liteneq} (iii)}{\leq}&
\exp\left(- \frac{\beta^2 (1+\frac{1}{2}t)}{3}\right)\\
&\leq &
\exp\left(-2\right).
\end{eqnarray*}
Therefore it holds that

$
\Pr\left(X \leq \frac{7}{10}\left(\delta+1\right) \mathrm{Opt}^{*} \text{\ and\ }
Y \geq n(1- e^{-\lambda})-\gamma \right)\geq  1-\left(\frac{1}{3}+\exp\left(-2\right)\right).
$

Since we have
\begin{eqnarray*}
 ne^{-\lambda} +\gamma&=& ne^{-\lambda}+ 2ne^{-\frac{\lambda}{2}}  \\
 &\leq & 3ne^{-\frac{\lambda}{2}} \\
 &= & 3ne^{-\frac{\delta+1}{4}} \\
&\leq & \frac{\delta}{20}\cdot \frac{n(b-1)}{\ell}\cdot \frac{47\ell}{(b-1)\delta e^{\frac{\delta}{4}} }\\
&\leq & \frac{\delta}{20}\cdot \frac{n(b-1)}{\alpha \ell}\\
&\overset{\textrm{Lem } \ref{liteneq} (iv)}{\leq} &  \frac{\delta}{20}\cdot \mathrm{Opt}^{*} 
\end{eqnarray*}
Then
\begin{eqnarray*}
\Pr\left(|C|\leq \left(\frac{15\delta+14}{20}\right)\cdot \mathrm{Opt}^{*}\right)
& = &
\Pr\left(|C|\leq \left(\frac{7}{10}\left(\delta+1\right)+\frac{\delta}{20}\right)\cdot \mathrm{Opt}^{*}\right)\\
&\overset{(\ref{expection})}{\geq} & \Pr\left(X+n-Y\leq\left( \frac{7}{10}\left(\delta+1\right)+\frac{\delta}{20}\right)\cdot \mathrm{Opt}^{*}\right)\\
&\geq & \Pr\left(X \leq \left( \frac{7}{10}\left(\delta+1\right)\right)\cdot \mathrm{Opt}^{*} \text{\ and\ }
n-Y \leq \frac{\delta}{20}\cdot \mathrm{Opt}^{*} \right)\\
&\geq & \Pr\left(X \leq \left( \frac{7}{10}\left(\delta+1\right)\right)\cdot \mathrm{Opt}^{*} \text{\ and\ }
Y \geq n-\frac{\delta}{20}\cdot \mathrm{Opt}^{*} \right)\\
&\geq & \Pr\left(X \leq \left( \frac{7}{10}\left(\delta+1\right)\right)\cdot \mathrm{Opt}^{*} \text{\ and\ }
Y \geq n-ne^{-\lambda}-\gamma \right)\\
&\geq & \Pr\left(X  \leq \left( \frac{7}{10}\left(\delta+1\right)\right)\cdot\mathrm{Opt}^{*} \text{\ and\ }
Y \geq n(1-e^{-\lambda})-\gamma  \right)\\
&\geq & 1-\left(\frac{1}{3}+\exp\left(-2\right)\right)\\
&\geq & 0.53
\end{eqnarray*}
\end{proof}
{\bf Remark 2.} The analysis of the algorithm \ref{alg:msetcover1} 
gave three upper bounds for our cover $C$ in cardinality .\\
Since $\frac{15\delta+14}{20}$ is smaller than $\left(1- \frac{(b-1) e^{\frac{\delta}{4}}}{94\ell} \right)\delta $ and $\frac{148}{149}\delta$ for $\delta\geq 3$, we get an approximation ratio of 
 $ \max\left\{ \frac{148}{149}\delta, \left(1- \frac{(b-1) e^{\frac{\delta}{4}}}{94\ell} \right)\delta \right\}$.
 
 As mentioned above our performed guaranty improves over the ratio presented by Srivastav et al \cite{EMS16}, and this without restriction on the parameter $\ell$.
 
 Namely, for $\delta \geq 24$ we have
 \begin{eqnarray*}
 e^{\frac{\delta}{4}}>\frac{11\times94}{72}(\delta-1)
 & \Rightarrow &
 \frac{11(\delta-1)}{72\ell}< \frac{e^{\frac{\delta}{4}}}{94\ell}\\
  &\overset{b-1\geq 1}{ \Rightarrow }&
 \frac{11(\Delta-b)}{72\ell}< \frac{(b-1)e^{\frac{\delta}{4}}}{94\ell}\\
  & \Rightarrow &
 \left(1-\frac{(b-1)e^{\frac{\delta}{4}}}{94\ell}\right) \delta
<
\left(1-\frac{11(\Delta-b)}{72\ell}\right) \delta. 
 \end{eqnarray*}

\section{The $\frac{5}{6}\left(1-\frac{1}{2\ell}\right)\delta$-Approximation for the \sbmultcov problem}
The designed algorithm in this section and its analysis relies on the lemma below.\\
We denote by $\bar{\Delta}:=\frac{1}{n}\cdot\sum\limits_{v\in V}\text{deg}(v)$ the average vertex degree in $\mathcal{H}$
  and by $\bar{b} :=\frac{1}{n}\cdot\sum\limits_{v\in V}b_v$ the average of the vector $\bb=(b_1,\cdots,b_n)$. 
\begin{lemma}\label{lem1}
Let $\mathcal{H}=(V,\mathcal{E})$ be a hypergraph and $\textbf{b}\in\mathbb{N}^V$ and $\textbf{k}\in\mathbb{N}^V_0$ such that $b_v+k_v=\textnormal{deg}(v)$ for any $v\in V$. Then $\nu_k(\mathcal{H})\leq \Big( \dfrac{\bar{\Delta}}{\bar{b}}\dfrac{\ell}{\bar{\ell}}-1 \Big)\rm{Opt}$.
\end{lemma}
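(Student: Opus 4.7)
The plan is to combine three ingredients: Ray--Chaudhuri duality (Theorem \ref{RC}), a standard degree double-counting identity, and a simple LP-type lower bound on Opt via summing the multicover constraints.

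First, I would invoke Ray--Chaudhuri to express $\nu_{\mathbf{k}}(\mathcal{H})$ in terms of Opt. Take a maximum $\mathbf{k}$-matching $M^{*}_{\mathbf{k}}$, so $|M^{*}_{\mathbf{k}}| = \nu_{\mathbf{k}}(\mathcal{H})$. Since $b_v + k_v = d(v)$ for every $v$, Theorem \ref{RC} guarantees that $S^{*}_{\mathbf{b}} := \mathcal{E}\setminus M^{*}_{\mathbf{k}}$ is a $\mathbf{b}$-set multicover, and moreover minimum cardinality because $M^{*}_{\mathbf{k}}$ is maximum. Hence
\begin{equation*}
\nu_{\mathbf{k}}(\mathcal{H}) \;=\; m - \mathrm{Opt}.
\end{equation*}
So the claim reduces to showing $m \leq \frac{\bar{\Delta}}{\bar{b}}\cdot\frac{\ell}{\bar{\ell}}\cdot \mathrm{Opt}$, i.e.\ to a lower bound $\mathrm{Opt}\geq \frac{\bar{b}\,\bar{\ell}}{\bar{\Delta}\,\ell}\,m$.

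Second, I would use double counting on the incidence pairs: $\sum_{v\in V} d(v) = \sum_{E\in \mathcal{E}} |E|$, which rewritten in terms of averages becomes
\begin{equation*}
n\bar{\Delta} \;=\; m\bar{\ell}.
\end{equation*}
This gives the bridge between vertex-side and edge-side averages.

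Third, for the lower bound on Opt, let $S\subseteq \mathcal{E}$ be any feasible $\mathbf{b}$-multicover. Summing the covering constraints $\sum_{E\in S,\,v\in E} 1 \geq b_v$ over all $v$ and swapping the order of summation yields
\begin{equation*}
\sum_{E\in S} |E| \;\geq\; \sum_{v\in V} b_v \;=\; n\bar{b}.
\end{equation*}
Since $|E|\leq \ell$ for every $E$, we get $\ell\cdot |S|\geq n\bar{b}$, hence $|S|\geq n\bar{b}/\ell$, and in particular $\mathrm{Opt} \geq n\bar{b}/\ell$.

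Combining all three ingredients:
\begin{equation*}
\frac{m}{\mathrm{Opt}} \;\leq\; \frac{m\ell}{n\bar{b}} \;=\; \frac{\ell}{\bar{b}}\cdot\frac{m}{n} \;=\; \frac{\ell}{\bar{b}}\cdot\frac{\bar{\Delta}}{\bar{\ell}} \;=\; \frac{\bar{\Delta}}{\bar{b}}\cdot\frac{\ell}{\bar{\ell}},
\end{equation*}
and hence $\nu_{\mathbf{k}}(\mathcal{H}) = m-\mathrm{Opt} \leq \bigl(\tfrac{\bar{\Delta}}{\bar{b}}\cdot\tfrac{\ell}{\bar{\ell}}-1\bigr)\mathrm{Opt}$, as required. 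There is essentially no hard step here; the only point that needs a moment's care is the application of Ray--Chaudhuri, which requires the compatibility condition $b_v+k_v=d(v)$ that is part of the hypothesis, and the use of Remark~1 if one wishes to allow multi-hyperedges. Everything else is bookkeeping using $n\bar{\Delta}=m\bar{\ell}$ and the trivial LP lower bound $\mathrm{Opt}\geq n\bar{b}/\ell$.
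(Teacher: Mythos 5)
Your proof is correct and follows essentially the same route as the paper: Ray--Chaudhuri duality to get $\nu_{\mathbf{k}}(\mathcal{H}) = m - \mathrm{Opt}$, the incidence double-count $n\bar{\Delta} = m\bar{\ell}$, and the bound $n\bar{b} \leq \ell\cdot\mathrm{Opt}$ obtained by summing the covering constraints over an optimal multicover. The only difference is cosmetic — you isolate the inequality $\mathrm{Opt} \geq n\bar{b}/\ell$ as a separate step before combining, whereas the paper chains the same estimates in a single display.
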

\begin{proof}
Let $S^{*}$ be an optimal set multicover of $\mathcal{H}$. By definition we have 
\begin{equation}\label{eq1}
m=\frac{n\bar{\Delta}}{\bar{\ell}}
\end{equation}
and by double-counting for the pairs $(v,E)$ with $v\in E$ and $E\in S^{*}$ we get
\begin{equation}\label{eq2}
n\bar{b}=\sum\limits_{v\in V}b_v\leq \ell|S^*|,
\end{equation} 
so we get

$\nu_{k}(\mathcal{H})= m-\rm{Opt}
= \left(\frac{m}{|S^{*}|}-1\right)\cdot\rm{Opt}
\overset{\eqref{eq1}}=\left(\frac{n\bar{\Delta}}{\bar
\ell|S^*|}-1\right)\cdot\rm{Opt}
\overset{\eqref{eq2}} \leq  \left(\frac{\bar{\Delta}\ell|S^{*}|}{\bar{b}\bar{\ell}|S^{*}|}-1\right)\cdot\rm{Opt}.$

\end{proof}

Let $\mathcal{M}$ be an approximation algorithm for the $k$-matching problem
with approximation guarantee \mbox{$0 < r \leq 1$}.

\smallskip
We will use Theorem \ref{RC} to construct a set multicover in $\mathcal{H}$. let ${\bf k}\in \mathbb{N}_{0}^{n}$ with components $k_{i}=d(v_{i})-b_{i}$ for all $i\in [n]$.
 Theorem \ref{RC} says that if we can
 find a ${\bf k}$-matching $M$ in $\mathcal{H}$,
 then $S:=\mathcal{E}\setminus M$ is a set multicover in $\mathcal{H}$.

\begin{algorithm}[H]
\label{alg:msetcover}
\SetKwInOut{Input}{Input}\SetKwInOut{Output}{Output}
\Input{A hypergraph $\mathcal{H}=(V,\mathcal{E})$ with $|V|=n$, ${\bf b}\in \mathbb{N}^{n}$.}
\Output{A set multicover $S$.}
\begin{enumerate}
\item For every $v_{i}\in V$ set $k_{i}:=d(v_{i})-b_{i}$ and ${\bf k}:=(k_{1},\dots,k_{n})$.
\item Execute Algorithm $\mathcal{M}$ on input ${\cal H}$ and ${\bf k}$ and return a ${\bf k}$-matching $M$.
\item Set $S:=\mathcal{E}\setminus M$.
\item Return $S$.
\end{enumerate}
\caption{\sbmultcov Algorithm}\label{MCA}
\end{algorithm}
\begin{theorem} 
The algorithm \autoref{alg:msetcover} returns a set multicover with an approximation guarantee of 
  $(1-r) \cdot \dfrac{\bar{\Delta}\cdot\ell}{\bar{b}\cdot\bar{\ell}} + r \leq \dfrac{\bar{\Delta}\cdot\ell}{\bar{b}\cdot\bar{\ell}}$. Where $r$ is the approximation ratio given by any algorithm to find a $\kk$-matching.
\end{theorem}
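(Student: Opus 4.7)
The plan is to combine the Ray--Chaudhuri duality (Theorem \ref{RC}) with the approximation ratio $r$ of the matching subroutine $\mathcal{M}$, and then to bound the total number of hyperedges $m$ in terms of $\mathrm{Opt}$ by reusing the two counting identities already established in the proof of Lemma \ref{lem1}.

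First, I would verify feasibility. By construction, $k_i := d(v_i) - b_i \geq 0$ for all $i \in [n]$ (otherwise no set multicover exists), so the input to $\mathcal{M}$ is well defined, and $\mathcal{M}$ returns a $\kk$-matching $M$ with $|M| \geq r \cdot \nu_{\kk}(\mathcal{H})$. Theorem \ref{RC} then guarantees that $S = \mathcal{E} \setminus M$ is a $\bb$-set multicover and, moreover, that $\nu_{\kk}(\mathcal{H}) = m - \mathrm{Opt}$.

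Second, I would estimate $|S|$. Using $|S| = m - |M|$ together with the approximation guarantee and the identity above,
\begin{equation*}
|S| \;\leq\; m - r \cdot \nu_{\kk}(\mathcal{H}) \;=\; m - r(m - \mathrm{Opt}) \;=\; (1-r)\, m + r\, \mathrm{Opt}.
\end{equation*}
The remaining step is to bound $m$ by $\mathrm{Opt}$. For this I would recycle the two identities from the proof of Lemma \ref{lem1}: the degree identity $m = n\bar{\Delta}/\bar{\ell}$ and the double-counting bound $n\bar{b} \leq \ell \cdot \mathrm{Opt}$. Combining them yields
\begin{equation*}
m \;=\; \frac{n\bar{\Delta}}{\bar{\ell}} \;\leq\; \frac{\bar{\Delta}\cdot \ell}{\bar{b}\cdot \bar{\ell}}\cdot \mathrm{Opt}.
\end{equation*}
Substituting into the previous display gives the claimed bound $|S| \leq \bigl((1-r)\frac{\bar{\Delta}\ell}{\bar{b}\bar{\ell}} + r\bigr)\,\mathrm{Opt}$. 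For the second inequality in the statement, I would observe that $\bar{\Delta} \geq \bar{b}$ (since $d(v) \geq b_v$ is necessary for feasibility) and $\ell \geq \bar{\ell}$ by definition, so the factor $\frac{\bar{\Delta}\ell}{\bar{b}\bar{\ell}} \geq 1$; hence $r$ can be replaced by $r\cdot \frac{\bar{\Delta}\ell}{\bar{b}\bar{\ell}}$ and the convex combination collapses to $\frac{\bar{\Delta}\ell}{\bar{b}\bar{\ell}}$.

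There is no real obstacle here; the only subtle point is realizing that the two auxiliary identities in the proof of Lemma \ref{lem1} already give the upper bound $m \leq \frac{\bar{\Delta}\ell}{\bar{b}\bar{\ell}}\mathrm{Opt}$ ``for free,'' which together with the Ray--Chaudhuri duality converts the $r$-approximate matching into an approximate set multicover with the stated ratio.
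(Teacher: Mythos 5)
Your proposal is correct and follows essentially the same route as the paper: Ray--Chaudhuri duality gives $\nu_{\kk}(\mathcal{H}) = m - \mathrm{Opt}$, the $r$-approximate matching gives $|S| \leq \mathrm{Opt} + (1-r)\nu_{\kk}(\mathcal{H})$ (your form $(1-r)m + r\,\mathrm{Opt}$ is algebraically identical), and the final bound comes from the same two counting identities that the paper packages as Lemma \ref{lem1}. The only cosmetic difference is that you bound $m$ directly rather than citing Lemma \ref{lem1} for $\nu_{\kk}(\mathcal{H})$, and you make the (correct) justification of the trailing inequality $(1-r)\frac{\bar{\Delta}\ell}{\bar{b}\bar{\ell}} + r \leq \frac{\bar{\Delta}\ell}{\bar{b}\bar{\ell}}$ explicit, which the paper leaves implicit.
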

Note that $\Delta \geq b$ is implicitly assumed
  since otherwise the problem is unfeasible;
  indeed, we assume $\deg_\mathcal{H}(v) \geq b_v$ for all $v$.

\begin{proof}
  
  For the solution $S$ of Algorithm 2 the following holds:
  \begin{align*}
    |S| & = |\mathcal{E}| - |M| \hspace*{4cm}  \text{operation of algorithm}  \\
    & = |\mathcal{E}| - \rm{Opt} + \rm{Opt} - |M| \\
    & = \rm{Opt} + \nu_\kk(\mathcal{H}) - |M| \hspace*{2.5cm} \text{Ray-Chaudhuri} \\
    & \leq \rm{Opt} + \nu_\kk(\mathcal{H}) - r \cdot \nu_\kk(\mathcal{H}) 
    \hspace*{1.7cm} \text{approximation guarantee of $\mathcal{M}$} \\
    & \leq \rm{Opt} + (1 - r) \cdot \nu_\kk(\mathcal{H}) \\
    & \leq \rm{Opt} + (1 - r) \cdot \Big( \frac{\bar{\Delta}}{\bar{b}}\frac{\ell}{\bar{\ell}}-1 \Big) \cdot \rm{Opt}
    \hspace*{1.cm} \text{by Lemma \ref{lem1}}  \\
    & = \Big( \frac{\bar{\Delta}}{\bar{b}}\frac{\ell}{\bar{\ell}}-r\Big(\frac{\bar{\Delta}}{\bar{b}}\frac{\ell}{\bar{\ell}}-1\Big) \Big) \cdot \rm{Opt}
    = \Big( \frac{\bar{\Delta}}{\bar{b}}\frac{\ell}{\bar{\ell}}\left(1-r \right)+ r \Big) \cdot \rm{Opt}   
  \end{align*}
  
  \begin{corollary}
  Let $\mathcal{H}=(V,\mathcal{E})$ with $\ell\leq (1+\epsilon)\bar{\ell}$ for any fixed $\epsilon \in [0,\frac{1}{2}]$.
  The algorithm \autoref{alg:msetcover} returns a set multicover with an approximation guarantee of $\frac{5}{6}\delta$.
  \end{corollary}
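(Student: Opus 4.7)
The plan is to apply the preceding theorem, which gives the approximation ratio
\[
\rho(\mathcal{H}) \;=\; \frac{\bar{\Delta}\,\ell}{\bar{b}\,\bar{\ell}}(1-r) + r,
\]
where $r$ is the guarantee of the $\kk$-matching subroutine $\mathcal{M}$, and to exploit the structural hypothesis $\ell\leq(1+\epsilon)\bar\ell$ with $\epsilon\leq\tfrac12$ to control the leading factor so that the whole expression drops to $\tfrac56\delta$.

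First I would bound $\tfrac{\bar{\Delta}}{\bar{b}}$ by $\delta$. Since every vertex satisfies $d(v)\leq\Delta$ and $b_v\geq b$, termwise averaging yields $\tfrac{\bar{\Delta}}{\bar{b}}\leq\tfrac{\Delta}{b}$. The feasibility assumption $\Delta\geq b$ (equivalently $d(v)\geq b_v$ pointwise, noted just above the theorem) implies $\tfrac{\Delta}{b}\leq\Delta-b+1=\delta$: indeed, $\tfrac{\Delta}{b}\leq\delta$ rearranges to $\Delta(b-1)\geq b(b-1)$, which is immediate. Combining with the edge-size hypothesis $\tfrac{\ell}{\bar\ell}\leq 1+\epsilon\leq\tfrac32$ yields the key inequality
\[
\frac{\bar{\Delta}\,\ell}{\bar{b}\,\bar{\ell}} \;\leq\; \frac{3}{2}\,\delta,
\]
so that the theorem's bound collapses to $\rho(\mathcal{H})\leq\tfrac32\delta(1-r)+r$.

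Second, I would pick $\mathcal{M}$ to be the polynomial-time $\kk$-matching routine whose ratio $r$ clears the algebraic threshold $r\bigl(\tfrac32\delta-1\bigr)\geq\tfrac23\delta$ needed to force $\tfrac32\delta(1-r)+r\leq\tfrac56\delta$; asymptotically this amounts to $r\geq\tfrac49$. The sharper statement $\tfrac56\bigl(1-\tfrac1{2\ell}\bigr)\delta$ advertised in the introduction comes from tracking the additive $r$ term more carefully and absorbing it, using $\delta\geq 3$ and $\ell\geq 2$, into a multiplicative correction of the form $1-\tfrac1{2\ell}$.

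The main obstacle is really just the choice of $\mathcal{M}$: once a concrete $\kk$-matching routine with a provable constant ratio above the threshold is fixed, the corollary is an immediate substitution of $\tfrac{\bar{\Delta}\ell}{\bar{b}\bar\ell}\leq\tfrac32\delta$ into the theorem together with elementary simplification. The structural inequalities $\tfrac{\bar{\Delta}}{\bar{b}}\leq\delta$ and $\tfrac{\ell}{\bar\ell}\leq\tfrac32$ are the essential ingredients supplied by the hypothesis on the hypergraph class.
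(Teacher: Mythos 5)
Your overall skeleton matches the paper's: substitute bounds on $\frac{\bar{\Delta}}{\bar{b}}$ and $\frac{\ell}{\bar{\ell}}$ into the theorem's ratio $\frac{\bar{\Delta}\ell}{\bar{b}\bar{\ell}}(1-r)+r$. But there is a genuine gap at the decisive step. You bound $\frac{\bar{\Delta}}{\bar{b}}\leq\frac{\Delta}{b}\leq\delta$, which gives only $\frac{\bar{\Delta}\ell}{\bar{b}\bar{\ell}}\leq\frac{3}{2}\delta$, and you then correctly compute that closing the argument from there requires a $\kk$-matching subroutine with ratio $r\gtrsim\frac{4}{9}$. No such subroutine is available: the only $\kk$-matching algorithm in the paper (and the best one cited) is the greedy routine of Theorem \ref{TH0} with $r=\frac{1}{\ell}$, which is below $\frac{4}{9}$ already for $\ell\geq 3$ and tends to $0$. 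So "pick $\mathcal{M}$ whose ratio clears the threshold" is not a step you can actually take, and your route does not terminate in a proof.

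The missing idea is a sharper bound on $\frac{\Delta}{b}$ itself. The paper shows $\frac{\Delta}{b}\leq\frac{5}{9}\delta$ via the elementary inequality $5b(b-1)\leq(b+2)(5b-9)\leq\Delta(5b-9)$, which rearranges to $9\Delta\leq 5b(\Delta-b+1)$; this uses the (implicit) assumptions $b\geq 3$ and $\Delta\geq b+2$. With that constant, the leading coefficient becomes $\frac{5}{9}(1+\epsilon)\leq\frac{5}{9}\cdot\frac{3}{2}=\frac{5}{6}$ \emph{before} any help from $r$, and the weak guarantee $r=\frac{1}{\ell}$ is then only needed to absorb the additive $r$ term: one gets $\frac{5}{6}\left(1-\frac{1}{\ell}\right)\delta+\frac{1}{\ell}\leq\left(\frac{5}{6}-\frac{1}{2\ell}\right)\delta$ using $\delta\geq 3$. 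In short, the strength of the result lives in the factor $\frac{5}{9}$ replacing your factor $1$, not in the quality of the matching subroutine; without it the corollary is out of reach.
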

{\bf Proof}. 
We have for all $b\geq 3$
\begin{equation*}
5b(b-1) \leq  (b+2)(5b-9)
\overset{\Delta\geq b+2} {\leq} \Delta(5b-9).
\end{equation*}
Hence $9\Delta\leq 5b(\Delta-b+1)$
and therewith $\frac{\Delta}{b}\leq \frac{5}{9}\delta$.\\

Now let consider the class of hypergraphs with $\ell\leq (1+\epsilon)\bar{\ell}$ for any fixed $\epsilon\in [0,\frac{1}{2}]$. Let set $r({\cal H})=\frac{1}{\ell}$ than we get  \\
 \begin{align*}
\frac{\bar{\Delta}}{\bar{b}}\frac{\ell}{\bar{\ell}}\left(1-r({\cal H}) \right)+ r({\cal H}) 
&\leq  
\frac{\Delta}{b}(1+\epsilon)\left(1-\frac{1}{l} \right)+ \frac{1}{l} 
& \\
&\leq  
\left(\frac{5}{9}(1+\epsilon)\left(1-\frac{1}{l} \right)+ \frac{1}{l\delta}\right)\cdot\delta  &  \\
&\overset{\delta\geq 3} \leq  
\left(\frac{15}{18}\left(1-\frac{1}{l} \right)+ \frac{1}{3l}\right)\cdot\delta  & \\
&\leq 
\left(\frac{5}{6}-\frac{1}{2l} \right)\cdot\delta 
& \\
&\leq 
\frac{5}{6}\delta. & 
 \end{align*}                                                                                                                                                                                                                                            \end{proof}
                                                                                                                                                                                                                                                   
The assumption $r({\cal H})=\frac{1}{\ell}$ may be proved by a simple  greedy analysis that we present in the following section. 
\subsection{An $\frac{1}{\ell}$-approximation algorithm for the ${\bf k}$-matching problem}
In this section we present a greedy algorithm that constructs a $\kk$-matching in $\mathcal{H}$ with an approximation ratio of $\frac{1}{\ell}$.

\begin{algorithm}[H]
\label{alg:match}
\SetKwInOut{Input}{Input}\SetKwInOut{Output}{Output}
\Input{A hypergraph $\mathcal{H}=(V,\mathcal{E})$ with $|V|=n$ and $|{\cal E}|=m$, ${\bf k}\in \mathbb{N}_{0}^{n}$.}
\Output{A ${\bf k}$-matching $M$ in ${\cal H}$.}
 \begin{enumerate}
 \item Initialize $M:=\emptyset$. Consider any ordering $E_{1},E_{2},\cdots, E_{m}$ of the edges in ${\cal E}$.
 %$\rho(E_1)\geq\hdots\geq \rho(E_m)$.
 \item for $i=1,2,\cdots,m$ do
 \item \qquad if $M\cup\{E_i\}$ is a ${\bf k}$-matching then
 \item \qquad \qquad set $M:=M\cup\{E_i\}$.
 \item Return $M$.
\end{enumerate}
\caption{${\bf k}$-Matching Algorithm}\label{MCA}
\end{algorithm}
\begin{theorem}\label{TH0}
Algorithm \ref{alg:match} constructs a  ${\bf k}$-matching $M$ in ${\cal H}$ with $|M|\geq \frac{1}{\ell}\nu_{{\bf k}}({\cal H})$.
\end{theorem}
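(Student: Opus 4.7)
The plan is to use a charging argument from $M^{\ast}$ to $M$. Let $M$ be the output of the algorithm and let $M^{\ast}$ be an optimal $\kk$-matching. Call a vertex $v$ \emph{saturated} if $d_M(v)=k_v$, and write $\mathrm{Sat}$ for the set of saturated vertices. First I would record the key structural fact that every edge $E\in\mathcal{E}\setminus M$ contains at least one saturated vertex: at the iteration in which $E$ was rejected, adding $E$ to the current partial matching violated the $\kk$-matching condition at some $v\in E$, and since the degree of $v$ in $M$ can only grow and never exceed $k_v$, this $v$ satisfies $d_M(v)=k_v$.

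Next I would define a map $\phi:M^{\ast}\setminus M\to\mathrm{Sat}$ by choosing, for each $E\in M^{\ast}\setminus M$, one saturated vertex $v(E)\in E$. Since $M^{\ast}$ is a $\kk$-matching, at most $k_v$ edges of $M^{\ast}$ pass through any $v$, which yields the preimage bound
\[
|\phi^{-1}(v)|\ \leq\ k_v-|\{F\in M^{\ast}\cap M : v\in F\}|.
\]
Summing over $v\in\mathrm{Sat}$ and applying the double-count identities $\sum_{v\in\mathrm{Sat}}d_M(v)=\sum_{F\in M}|F\cap\mathrm{Sat}|$ and the analogous equality for $M^{\ast}\cap M$, I would obtain
\[
|M^{\ast}\setminus M|\ \leq\ \sum_{F\in M\setminus M^{\ast}}|F\cap\mathrm{Sat}|\ \leq\ \ell\,|M\setminus M^{\ast}|.
\]
Adding $|M^{\ast}\cap M|$ to both sides and using the trivial bound $|M^{\ast}\cap M|\leq \ell\,|M^{\ast}\cap M|$ gives $|M^{\ast}|\leq\ell\,|M|$, which rearranges to the claim.

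The main obstacle I anticipate is calibrating the charging so that the final constant is $\ell$ rather than $\ell+1$. A naive version in which each rejected $E$ spreads unit charge uniformly over all $k_{v(E)}$ edges of $M$ incident to $v(E)$ produces only $|M^{\ast}\setminus M|\leq\ell\,|M|$, which combined with $|M^{\ast}\cap M|\leq|M|$ gives the weaker $(\ell+1)$-factor. The sharpening relies on noticing that edges of $M^{\ast}\cap M$ already occupy $k_v$-slots at saturated vertices without contributing to $M^{\ast}\setminus M$; this lets the right-hand side be tightened to $|M\setminus M^{\ast}|$, so that the residual $|M^{\ast}\cap M|$ is absorbed into $\ell|M|$.
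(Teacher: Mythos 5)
Your proof is correct, and it reaches the same $1/\ell$ bound by a genuinely different route than the paper. The paper runs an iterative exchange argument: it inserts the greedy edges $f_1,\dots,f_N$ into $M^{\ast}$ one at a time, removes at most $\ell$ conflicting $M^{\ast}$-edges per insertion (counting $r_i\leq\ell$, with $r_i=1$ when $f_i\in M^{\ast}$), and then argues by contradiction with greedy maximality that every $M^{\ast}$-edge is eventually accounted for, i.e.\ $\sum_i r_i=|M^{\ast}|$, whence $|M^{\ast}|\leq\ell|M|$. You instead give a static charging argument: the single structural fact that every rejected edge contains a vertex saturated by $M$ (which is where greedy maximality enters), followed by a preimage count at saturated vertices in which the identity $k_v-d_{M^{\ast}\cap M}(v)=d_{M\setminus M^{\ast}}(v)$ for saturated $v$ delivers $|M^{\ast}\setminus M|\leq\ell|M\setminus M^{\ast}|$ and hence $|M^{\ast}|\leq\ell|M|$. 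The two arguments handle the sharpening from $\ell+1$ to $\ell$ in corresponding ways (the paper via $r_i=1$ for edges of $M\cap M^{\ast}$, you via subtracting the slots occupied by $M\cap M^{\ast}$), but your version avoids the dynamic bookkeeping and the somewhat delicate claim that the removal process exhausts all of $M^{\ast}$; it isolates the use of greedy maximality into one clean local statement, which arguably makes the proof easier to verify. Both approaches are equally elementary and give exactly the same constant.
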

{\bf Proof.} It is clear by construction that $M$ is a ${\bf k}$-matching. Set $N:=|M|$ and let $M=\{f_{1},\cdots,f_{N}\}$.
Let $|M^{*}|$ be a maximum ${\bf k}$-matching in ${\cal H}$. We compare the cardinality of $M$ with the cardinality of $M^{*}$ 
by iteratively adding all $M$-edges into $M^{*}$ and removing some $M^{*}$-edges in order to fulfill the ${\bf k}$-matching condition.
Suppose we have arrived at edge $f_{i}$, $i\in [N]$. If $f_{i}\in M^{*}$, we keep $f_{i}$ in $M^{*}$. Otherwise, for every $v\in f_{i}$, for which the ${\bf k}$-matching conditions in $M^{*}\cup \{f_{i}\}$ is violated, remove one $M^{*}$-edge
incident in $v$. 
Thus at most $\ell$ $M^{*}$-edge are removed in this step. Define $r_{i}$ as the number of removed $M^{*}$-edges in step $i$, if $f_{i}\notin M^{*}$, and $r_{i}:=1$, if $f_{i}\in M^{*}$. Trivially $r_{i}\leq \ell$ for all $i\in [N]$. 
Furthermore, $\sum_{i\in [N]} r_{i}=|M^{*}|$. Assume for a moment that $\sum_{i\in [N]} r_{i}\leq |M^{*}|-1$. Then some $M^{*}\setminus M$ edges have not been removed. Let $E_{t}$ be such an edge. Since $M\cup\{E_{t}\}$  is a ${\bf k}$-matching, 
the algorithm should have included $E_{t}$ into $M$, which is a contradiction.
Summation over the number of iterations gives
\begin{equation*}|M|=\sum_{i\in [N]}1 \geq \sum_{i\in [N]} \frac{r_{i}}{\ell}=\frac{1}{\ell}\sum_{i\in [N]} r_{i}=\frac{1}{\ell}|M^{*}|=\frac{1}{\ell} \nu_{{\bf k}}({\cal H}).\end{equation*} \hfill$\Box$\\
{\bf Remark 3}. We note that, 
\begin{itemize}
\item[\rm{i})]  our approximation ratio of $\frac{1}{\ell}$ for ${\bf k}$-matching problem improves over the ratio of $\frac{1}{\ell+1}$ presented by Krysta \cite{Krysta}. This to our knowledge the best achieved result for the problem without restrictions on either on ${\bf k}$ nor on the instance.\\
\item[\rm{ii})]  Theorem \ref{TH0} will be used for the construction of a ${\bf k}$-matching in this paper. It is also possible to use other ${\bf k}$-matching approximation algorithms. 
\end{itemize}
\section{Future Work}
We believe now that the conjecture of Peleg et all holds in general setting. Hence proving the truly of the conjecture remains a big challenge for our future works.

\end{document}